\documentclass[12pt]{article} 

\usepackage[usenames,dvipsnames,condensed]{xcolor}

\usepackage{amsmath}
\usepackage{amsthm}
\usepackage{amsfonts}
\usepackage[english]{babel}
\usepackage[usenames]{xcolor}
\usepackage{graphicx}
\usepackage{soul}
\usepackage{float}
\usepackage{stfloats}
\usepackage{morefloats}
\usepackage{cite}
\usepackage{lscape}
\usepackage{epstopdf}
\usepackage{bbm}
\usepackage[lite]{amsrefs}
\usepackage[OT2,T1]{fontenc}
\DeclareSymbolFont{cyrletters}{OT2}{wncyr}{m}{n}

\usepackage{authblk}

\setlength{\affilsep}{1em}
\newcommand{\email}[1]{%
    \normalsize\href{mailto:#1}{\color{black}{#1}}}
    
\usepackage{enumitem}  
\usepackage{calc}
\setlist{labelindent=1pt,itemsep=.5em}
\setlist[itemize]{leftmargin=1.2cm}
\setlist[enumerate]{itemindent=0em,leftmargin=1.2cm}
\setlist[enumerate,1]{label={\upshape(\roman*)}}
    
\usepackage{hyperref} 

\usepackage{marginnote}

\usepackage{comment}

\usepackage{appendix}
\usepackage[lite]{amsrefs}

\renewcommand{\PrintDOI}[1]{\href{http://dx.doi.org/\detokenize{#1}}{doi: \detokenize{#1}}%
  \IfEmptyBibField{pages}{, (to appear in print)}{}}

\allowdisplaybreaks

%
%
%
%

\newtheorem{theorem}{Theorem}[section]
\newtheorem{corollary}[theorem]{Corollary}

\newtheorem{proposition}[theorem]{Proposition}
\newtheorem{definition}[theorem]{Definition}

\newtheorem{example}[theorem]{Example}

\newtheorem{remark}[theorem]{Remark}


\textheight 20cm
\topmargin=-0.5cm
\textwidth=17cm
\hoffset=-1.6cm  
\newtheorem{thm}{Theorem}[section]

\newtheorem{lem}[thm]{Lemma}

\theoremstyle{definition}

\theoremstyle{remark}

\allowdisplaybreaks





\newcommand\rt{\triangleright}

\newcommand{\Z}{\mathbb{Z}}


\title{The derivation problem for quandle algebras}

\author[1]{M. Elhamdadi}
\author[2]{A. Makhlouf} 
\author[3]{S. Silvestrov}
\author[4]{E. Zappala}
\affil[1]{\Affilfont Department of Mathematics, 
University of South Florida, \authorcr \Affilfont 
Tampa, FL 33620, U.S.A. \authorcr \Affilfont
\email{emohamed@math.usf.edu}}
\affil[2]{\Affilfont IRIMAS - d\'epartement de Math\'ematiques,  \authorcr \Affilfont 
Universit\'e de Haute Alsace, \authorcr \Affilfont
6 rue des Fr\`{e}res Lumi\`{e}re, 68093 Mulhouse, France \authorcr \Affilfont
\email{Abdenacer.Makhlouf@uha.fr}}
\affil[3]{\Affilfont Division of Mathematics and Physics,
\authorcr \Affilfont School of Education, Culture and Communication,
\authorcr \Affilfont M\"{a}lardalen University, Box 883, 72123 V{\"a}ster{\aa}s, Sweden \authorcr \Affilfont
\email{sergei.silvestrov@mdh.se}}
\affil[4]{\Affilfont Institute of Mathematics and Statistics, 
\authorcr \Affilfont University of Tartu
\authorcr \Affilfont Narva mnt 18, 51009 Tartu, Estonia
\authorcr \Affilfont
\email{emanuele.amedeo.zappala@ut.ee}, \email{zae@usf.edu}}





\begin{document}

\maketitle

\begin{abstract}
The purpose of this paper is to  introduce and investigate the notion of derivation for quandle algebras.  More precisely, we describe the symmetries on structure constants providing  a characterization  for a linear map to  be a derivation. We  obtain a complete characterization of derivations in the case of  quandle algebras of \emph{dihedral quandles} over fields of characteristic zero, and provide the dimensionality of the Lie algebra of derivations.  Many explicit examples and computations are given over both zero and  positive characteristic. 
	 Furthermore, we investigate inner derivations, in the sense of Schafer for non-associative structures. We obtain necessary conditions for the Lie transformation algebra of quandle algebras of  Alexander quandles, with explicit computations in low dimensions.
\end{abstract}

\tableofcontents

\section{Introduction}
Quandles were introduced independently by Joyce and Matveev  \cites{Joyce,Matveev} in the 1980s as non-associative algebraic structures with the purpose of constructing invariants of knots and links.  Since then, quandles have been extensively studied by both topologists and algebraists.  For more details on quandles, we refer the reader to \cites{EN,Joyce,Matveev}.   Since quandles are set-theoretic objects, it is natural to consider their \emph{linearization} in a similar way to linearization of groups: the group algebras \cite{Passman}.  Quandle rings, which constitute the starting point of the present article, were introduced in \cite{BPS} in an analogous way  to the theory of group rings.  A blending of quandle theory with ring theory was given in \cite{EFT} where various properties of quandle rings were established. Derivations of associative algebras have been studied at least for more than eighty years since the work of Nathan Jacobson "Abstract Derivations and Lie Algebras" in 1937.  Derivations are very closely related to Hochschild cohomology. For an algebra $A$, and an $A$-module $M$, a derivation of $A$ with values in $M$ is a linear map $D:A \rightarrow M$ satisfying $D(xy)= D(x)y +xD(y)$ for all $x,y$ in $A$.  For a fixed element $u \in A$, the derivation $ad(u)(v):=[u,v]=uv-vu,$ for all $v$ in $A$ is called \emph{inner} derivation.  It is well known that $ad(u)$ commute with the Hochschild boundary, thus leading to the fact that derivations of $A$ modulo \emph{inner} derivations (called sometimes \emph{outer} derivations) correspond to the first cohomology group of $A$. Derivations of group algebras have been studied extensively, for the cases of locally compact groups, von Neumann group algebras and discrete groups \cites{Los,Kad,AM,Ghah}.  The theory of finite-dimensional non-associative algebras was given by Schafer in \cite{Sha2} where modules and representations for classes of non-associative algebras were given.  Furthermore, inner derivations of non-associative algebras were studied by Schafer in \cite{Scha} where a definition of inner derivation in the non-associative context is given and it is shown that the derivations of semi-simple alternative 
and Jordan algebras are all inner.

In this article, we  deal with derivations over quandle algebras of some quandles.   We provide a necessary and sufficient condition, on structure constants, for a linear map to be a derivation on any quandle algebra (Proposition \ref{lem:derivation}).
This then allows us to obtain the complete characterization of derivations of the quandle algebra over dihedral quandles (cf.  Theorem~\ref{pro:dihedralderivations}): 

Let $X = \Z_n$ be a dihedral  quandle and $\mathcal{A} = \mathbbm k [X]$ be its quandle algebra, where $char\ \mathbbm k = 0$, then
\begin{enumerate}[label=\rm \arabic*)]
\item If $n$ is odd, the derivation algebra $\mathfrak{D}(\mathcal{A})$ is trivial;
\item If $n = 4k$ for some $k$, the coefficients $c_i^j$ of arbitrary matrices in $\mathfrak{D}(\mathcal{A})$ are determined by the symmetries $c_{t+2d}^x = c_t^{2t+2d-x}$, $c_t^x = -c_t^{x+2k}$ and $c_t^x = c_{t+2k}^{x+2k}$, for all $x,t,d$.
\item 
If $n = 2k$ for some odd $k$, the coefficients $c_i^j$ of matrices in the derivation algebra $\mathfrak{D}(\mathcal{A})$ are characterized by the symmetries $c_{t+2d}^x = c_t^{2t+2d-x}$, and $c_t^x = -c_t^{x+k}$.
\end{enumerate}
As consequences of Theorem~\ref{pro:dihedralderivations}, we obtain the forms of the matrices of derivations in the context of dihedral quandles.  Furthermore, the dimensions of the derivation algebras of the quandle algebra over dihedral quandles are given in Theorem~\ref{dimensions}. Next, we consider the problem of inner derivations in quandle algebras, following previous work of Schafer \cite{Scha}, by means of the notion of Lie transformation algebra. We find some necessary conditions for linear maps on quandle algebras of Alexander quandles to be in the Lie transformation algebra, and perform computations for low degrees, explicitly exhibiting the Lie transformation algebra. We mention that in \cite{NSS} the notion of ``derivation of a quandle'' was introduced, where derivations were defined as twisted analogues of quandle homomorphisms. It is not yet clear how the theory of derivations of quandle algebras and that of derivations of quandles are related.

The article is organized as follows.  In Section~\ref{review} we review the basics of quandles, quandle algebras and give few examples.  Section~\ref{Der} includes the \emph{main results} of the article, in particular the complete characterization of derivations of the quandle algebra over \emph{dihedral quandles} and the dimensions of their derivation algebras.  In Section~\ref{Computations} we give many examples stating the matrix forms of the derivations of quandle algebras over dihedral quandle of cardinality up to $6$ in positive characteristic cases.  This section also contains the derivations of the quandle algebra of all quandles of order $3$ and $4$ in both zero and positive characteristics.  We also give an 
example showing that derivations of the quandle algebra over the conjugation quandle of the symmetric group $S_3$ is trivial.  Section~\ref{LieTransformation} introduces the Lie transformation algebra, that is the smallest Lie algebra containing right and left multiplications of the quandle algebra.  For the case of Alexander quandles a necessary condition on the form of the elements of the Lie transformation algebra is described.  The section ends with  explicit computations of the Lie transformation algebra for quandles of order three.  
 
\section{Quandles and Quandle Algebras}\label{review}
We start by recalling the definition of a quandle and give a few examples.
\begin{definition}A \textit{quandle} is a set $X$ with an operation $\rhd:X \times X \rightarrow X$ such that the following three conditions are satisfied. 
\begin{eqnarray*}
& &\mbox{\rm (I) \ }   \mbox{\rm  For any $x \in X$,
$x \rt x =x$.} \label{axiom1} \\
& & \mbox{\rm (II) \ }\mbox{\rm For any $y,z \in X$, there is a unique $x \in X$ such that 
$ x \rt y=z$.} \label{axiom2} \\
& &\mbox{\rm (III) \ }  
\mbox{\rm For any $x,y,z \in X$, we have
$ (x \rt y) \rt z=(x \rt z)\rt(y \rt z). $} \label{axiom3} 
\end{eqnarray*}
\end{definition}

Here are a few typical examples of quandles.
\begin{enumerate}[label=\rm \arabic*)]
\item
A quandle $X$ is \emph{trivial} if for all $x, y \in X,$ we have $x \rt y=x.$
\item
Let $n$ be a positive integer.  Then $x\rt y=-x+2y$ gives a quandle structure on the set $\mathbb{Z}_n$ of integers modulo $n$, called \emph{dihedral} quandle.
\item
Any $\mathbb{Z}[T, T^{-1}]$-module $M$
is a quandle with
$x \rt y= Tx+(1-T)y$, $x,y \in M$, called an {\it  Alexander  quandle}.
\item
Any group $X=G$ with conjugation $x\rt y=y^{-1} xy$ becomes a quandle.
\end{enumerate}
  
  Let $X$ be a quandle.
  The {\it right multiplication}  $\mathcal{ R}_x:X \rightarrow  X$, by $x \in X$, is defined
by $\mathcal{ R}_x(y) = y \rt x$ for all $y \in X$. Similarly the {\it left multiplication} $\mathcal{ L}_x$
is defined by $\mathcal{ L}_x(y) = x \rt y$. Then $\mathcal{ R}_x$ is a bijection of $X$ by Axiom (II).
The subgroup of the group of bijections of $X$ generated by $\mathcal{ R}_x$, $x \in X$, is 
called the {\it inner automorphism group} of $X$,  and is 
denoted by ${\rm Inn}(X)$. 
A quandle is {\it connected} if ${\rm Inn}(X)$ acts transitively on $X$.  It is called \emph{involutive} if $R_x=R_x^{-1}$ for all $x \in X$ and \emph{latin} if $L_x$ are bijections for all $x \in X$.

For a quandle $(X, \rt)$ and a field $\mathbbm{k}$, we consider, in general, a \textit{nonassociative} algebra $\mathbbm {k}[X]$.   Precisely,  we let $\mathbbm {k}[X]$ be the set of elements that are uniquely expressible in the form $\sum_{x \in X }  a_x e_x$, where $a_x=0$ for almost all $x$, (that is $e_x$ represent the basis elements of $\mathbbm {k}[X]$).  Addition on $\mathbbm {k}[X]$ is defined as usual and the multiplication is given by the following operation, where $x, y \in X$ and $a_x, a_y \in \mathbbm {k}$,
 $$   ( \sum_{x \in X }  a_x e_x) \cdot ( \sum_{ y \in X }  b_y e_y )
 =   \sum_{x, y \in X } a_x b_y e_{x \rt y}. $$
 
 We observe that, in general, the algebra just described \emph{does not} satisfy the self-distributivity condition that characterizes quandles. This happens only in very special cases. To date, we are not aware of any polynomial identity that quandle algebras satisfy in general. 
 
 By analogy with group algebras, the augmentation ideal is defined to be the kernel of the surjective algebra homomorphism $\epsilon: \mathbbm{k}[X] \rightarrow \mathbbm{k}$ such that $\epsilon (\sum_{x \in X }  a_x e_x)=\sum_{x \in X }  a_x $.  It will be denoted by $I_X$.  It is two-sided ideal.  By fixing $x_0 \in X$, one sees that the elements $e_x-e_{x_0}, (x \in X, x \neq x_0)$ form a basis of $I_X$.  We also have the isomorphism of rings $\mathbbm{k}[X]/I_X \cong \mathbbm{k}$.
 
 {\bf Notation:}   Throughout the whole article, it is important to make a distinction between the product in the quandle, denoted by $\rt$, and the product, denoted by $\cdot$, in the quandle algebra.  
 
 As a consequence of the right-distributivity in a quandle $(X, \rt)$, we have the following lemma.
\begin{lem}
In the quandle algebra $\mathbbm {k}[X]$, the set $J_X:=\left\langle e_{x \rt y} -e_{y \rt x}, \; x,y \in X \right\rangle $ generated by the elements of the form $e_{x \rt y} -e_{y \rt x}$ is a \emph{right} ideal of $\mathbbm {k}[X]$.  Furthermore, if $X$ is a \emph{medial} quandle, then $J_X$ is \emph{left} ideal and thus \emph{two sided} ideal. 
\end{lem}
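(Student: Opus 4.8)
The plan is to verify the defining closure property directly on the generators, using bilinearity of the algebra product to reduce to products of a single generator with a single basis element. Since $J_X$ is by definition the $\mathbbm{k}$-linear span of the elements $e_{x \rt y} - e_{y \rt x}$, an arbitrary element of $J_X$ is a finite linear combination of such generators; because $J_X$ is a subspace, to establish that it is a right ideal it suffices to check that $(e_{x \rt y} - e_{y \rt x}) \cdot e_z \in J_X$ for all $x, y, z \in X$, and analogously that $e_z \cdot (e_{x \rt y} - e_{y \rt x}) \in J_X$ for the left-sided statement.

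For the right ideal claim, I would expand the product using the algebra multiplication $e_a \cdot e_b = e_{a \rt b}$, obtaining $(e_{x \rt y} - e_{y \rt x}) \cdot e_z = e_{(x \rt y) \rt z} - e_{(y \rt x) \rt z}$. The key step is to apply the right self-distributivity of Axiom (III) to each term, namely $(x \rt y) \rt z = (x \rt z) \rt (y \rt z)$ and symmetrically $(y \rt x) \rt z = (y \rt z) \rt (x \rt z)$. Setting $a := x \rt z$ and $b := y \rt z$, the product collapses to $e_{a \rt b} - e_{b \rt a}$, which is again one of the defining generators of $J_X$. Hence the product lies in $J_X$, and since this holds for every generator and every basis element $e_z$, the right ideal property follows by linearity.

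For the medial case, the left-sided product $e_z \cdot (e_{x \rt y} - e_{y \rt x}) = e_{z \rt (x \rt y)} - e_{z \rt (y \rt x)}$ does not simplify under Axiom (III) alone, and this is the main obstacle. The trick I would use is to first rewrite $z = z \rt z$ via the idempotency of Axiom (I), so that $z \rt (x \rt y) = (z \rt z) \rt (x \rt y)$, and then invoke the medial law $(a \rt b) \rt (c \rt d) = (a \rt c) \rt (b \rt d)$ with $a = b = z$, $c = x$, $d = y$ to obtain $(z \rt z) \rt (x \rt y) = (z \rt x) \rt (z \rt y)$. Applying the same maneuver to the second term yields $z \rt (y \rt x) = (z \rt y) \rt (z \rt x)$. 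Writing $p := z \rt x$ and $q := z \rt y$, the left product becomes $e_{p \rt q} - e_{q \rt p}$, which is again a generator of $J_X$. Therefore $J_X$ is also a left ideal, hence two-sided, whenever $X$ is medial. The only delicate point is recognizing that mediality must be supplemented by the idempotency of Axiom (I) to create the pair of identical entries needed to apply the medial law; without that insertion of $z = z \rt z$ the argument does not close.
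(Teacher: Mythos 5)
Your proof is correct and follows essentially the same route as the paper: expand the product on generators, apply right self-distributivity (Axiom III) for the right ideal claim, and for the medial case insert $e_z = e_{z \rt z}$ via idempotency so the medial law yields $(z\rt x)\rt(z\rt y)$, then conclude by linearity. No substantive difference from the paper's argument.
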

\begin{proof}
	Let $J_X=\left\langle e_{x \rt y} -e_{y \rt x}, \; x,y \in X \right\rangle $, one has
	\[
	(e_{x \rt y} -e_{y \rt x})\cdot e_z= e_{(x \rt y) \rt z} - e_{(y \rt x) \rt z}=e_{(x \rt z)\rt(y \rt z)}-e_{(y \rt z)\rt (x \rt z)}.
	\]
	Thus, by linearity, $J_X$ is a right ideal of $\mathbbm {k}[X]$.
	Now if $X$ is a \emph{medial} quandle, then 
	\[
	 e_z \cdot (e_{x \rt y} -e_{y \rt x})= e_{z \rt z} \cdot (e_{x \rt y} -e_{y \rt x})= e_{(z \rt x) \rt (z \rt y)}- e_{(z \rt y) \rt (z \rt x)}.  
\qedhere 	\]
\end{proof}
\begin{remark}
It is clear that for any quandle $X$, the ideal $J_X$ embeds in the ideal $I_X$. Now since $x \rt y=y \rt x$ for all $x,y$ in the dihedral quandle $\mathbb{Z}_3=\{0,1,2\}$, then $$\{0\}=J_X \subset I_X=\left\langle e_1-e_0, e_2-e_0 \right\rangle. $$
\end{remark}
   
\section{Derivations of Quandle Algebras}
\label{Der}
Let $A$ be a (possibly non-associative) algebra. A linear map $D: A \longrightarrow A$ satisfying the Leibniz rule, i.e.
	$
	D(x\cdot y) = D(x)\cdot y + x\cdot D(y)
	$ 
	for all $x,y\in A$, is called a {\it derivation} of $A$.
The derivations of $A$ form a Lie algebra denoted by $\mathcal D(A)$, and called {\it derivation algebra} of $A$.

In this section we study derivations on some types of quandles.

\begin{definition}
	{\rm 
A derivation on a quandle algebra $\mathbbm {k}[X]$ is a \emph{linear} map $D:\mathbbm{k}[X] \rightarrow \mathbbm{k}[X]$ that verify the Leibniz identity on the vector basis 
$$
D(e_x \cdot e_y) =D(e_x) \cdot e_y + e_x \cdot D(e_y),
$$
where $e_x \cdot e_y=e_{x \rt y}$ and extended by linearity.
}
\end{definition}
 
\subsection{Characterization of Quandle Algebra Derivations }

First, we derive a characterization of derivations on quandle algebras. This will be used in the rest of the article to study the derivation algebras of some important families of quandles, and to determine the inner and outer derivations of the quandle algebra.

\begin{proposition}\label{lem:derivation}
    Let $\mathbbm k$ be a field of any characteristic, let $X$ denotes a finite quandle and let $A = \mathbbm k[X]$ be its quandle algebra. Let $D$ be a derivation of $A$, then the matrix $ (c^j_i)$ representing $D$ in the basis $e_x$, $x\in X$ satisfies the relations
    $$
    c_{x\rt y}^z = c_x^{\hat z} + \sum_{q\in x^{-1}(z)} c_y^{q}
    $$
    for all $x,y,z\in X$, where $\hat z$ is the unique element of $X$ such that $\hat z \rt y = z$, and $x^{-1}(z)$ is the set (possibly empty) of elements $w\in X$ such that $x \rt w = z$. 
\end{proposition}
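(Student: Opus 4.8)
The plan is to evaluate the Leibniz identity on the basis products and read off the coefficient of a fixed basis vector $e_z$ on each side. Writing $D(e_x) = \sum_{w \in X} c_x^w e_w$ for every $x \in X$, so that $(c_i^j)$ is the matrix of $D$ in the basis $\{e_x\}$, I would begin from the single defining relation $e_x \cdot e_y = e_{x \rt y}$ and apply $D$ to it, using linearity.

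Expanding the left-hand side gives $D(e_{x \rt y}) = \sum_z c_{x \rt y}^z e_z$, whose coefficient of $e_z$ is exactly $c_{x \rt y}^z$. For the right-hand side I would compute the two terms separately, using linearity of $D$ together with the bilinearity of the product and the rule $e_a \cdot e_b = e_{a \rt b}$: this yields $D(e_x) \cdot e_y = \sum_w c_x^w e_{w \rt y}$ and $e_x \cdot D(e_y) = \sum_w c_y^w e_{x \rt w}$. The whole argument then reduces to identifying, for each of these two sums, which summands contribute to the fixed basis vector $e_z$.

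This is where the quandle axioms enter, and where the (mild) asymmetry of the two terms appears. In the first sum the index $w$ contributes precisely when $w \rt y = z$; by Axiom (II) there is a unique such $w$, namely the element denoted $\hat z$, so the total contribution is the single term $c_x^{\hat z}$. In the second sum the index $w$ contributes when $x \rt w = z$; here left multiplication by $x$ need not be injective or surjective, so the fiber $x^{-1}(z) = \{ w : x \rt w = z \}$ may be empty or may contain several elements, and the contribution is the sum $\sum_{q \in x^{-1}(z)} c_y^q$. Equating the coefficient of $e_z$ on both sides then produces the claimed relation.

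The computation itself is routine; the only point requiring care is precisely this contrast between the right-multiplication term, governed by the bijectivity guaranteed by Axiom (II) and producing a single well-defined index $\hat z$, and the left-multiplication term, which carries no such guarantee and must be written as a sum over a possibly empty or multi-element fiber. I expect the main thing to get right is keeping the two index conventions straight — in particular, not conflating $\hat z$ (defined by $\hat z \rt y = z$) with the elements of $x^{-1}(z)$ (defined by $x \rt q = z$).
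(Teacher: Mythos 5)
Your proposal is correct and follows essentially the same route as the paper: apply $D$ to $e_x\cdot e_y=e_{x\rt y}$, expand both sides in the basis, and equate the coefficient of $e_z$, using Axiom (II) to identify the unique $\hat z$ in the right-multiplication term and the possibly empty or multi-element fiber $x^{-1}(z)$ in the left-multiplication term. No gaps to report.
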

\begin{proof}
    Using linearity of $D$, we see that $D$ satisfies the Leibniz rule if and only if for all $x,y\in X$, $D$ satisfies the Leibniz rule on $e_x$ and $e_y$. This means that the equation
    $$
    D(e_x\cdot e_y) = D(e_x)\cdot e_y + e_x\cdot D(e_y),
    $$
    holds for all $x,y$. Using Einstein summation convention, the previous equation can be rewritten as
    $$
    c_{x \rt y}^z e_z = c_x^{z'} e_{z' \rt y} + c_y^{z''} e_{x \rt z''}.
    $$
    To conclude we need to equate terms on the right hand side corresponding to $e_z$, for any given $z\in X$. Therefore we take $z' = \hat z$, and we consider all the $z''\in X$ such that $x \rt z'' = z$. 
\end{proof}
\begin{remark}
Observe that $\hat z=z \rt^{-1} y$, and thus when $X$ is an involutive quandle one has $\hat z = z \rt y$, while when the quandle $X$ is latin, i.e. the left multiplication map is also invertible, we see that $x^{-1}(z)$ consists of one single element. Clearly, when $X$ is not connected it may well happen that  $x^{-1}(z) = \emptyset$.
\end{remark}

First we consider derivations of 
quandle algebras of trivial quandles.

\begin{proposition}
 	Let  $(X,\rt)$ be the trivial quandle, then a linear map $D$ is a derivation on the quandle algebra $\mathbbm {k}[X]$ if and only if $Im(D)\subseteq I_X$, that is, $D(e_x)= \sum_{u \in X }  c_x^u e_u$, where $ \sum_{u \in X }  c_x^u =0$ for all $x \in X$.
\end{proposition}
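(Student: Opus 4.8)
The plan is to substitute the trivial quandle operation directly into the Leibniz identity and read off the constraint on the structure constants $c_x^u$ of $D$. Recall that in the trivial quandle $x \rt y = x$ for all $x,y$, so the product in $\mathbbm{k}[X]$ satisfies $e_u \cdot e_y = e_{u \rt y} = e_u$ and $e_x \cdot e_v = e_{x \rt v} = e_x$. In particular, right multiplication by any basis element acts as the identity on basis vectors, while left multiplication by $e_x$ collapses every basis vector onto $e_x$. These two observations are the whole engine of the argument.

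First I would write $D(e_x) = \sum_{u \in X} c_x^u e_u$ and compute the two terms on the right-hand side of $D(e_x \cdot e_y) = D(e_x)\cdot e_y + e_x \cdot D(e_y)$. Using the remarks above, the first term gives $D(e_x)\cdot e_y = \sum_u c_x^u e_{u \rt y} = \sum_u c_x^u e_u = D(e_x)$, and the second gives $e_x \cdot D(e_y) = \sum_v c_y^v e_{x \rt v} = \bigl(\sum_v c_y^v\bigr) e_x$. Since $e_x \cdot e_y = e_x$, the left-hand side is simply $D(e_x)$, so after cancelling $D(e_x)$ the Leibniz identity on the pair $e_x, e_y$ is seen to be equivalent to $\bigl(\sum_v c_y^v\bigr) e_x = 0$.

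Because the $e_x$ are linearly independent and nonzero, this last equation holds for all $x,y \in X$ if and only if $\sum_v c_y^v = 0$ for every $y \in X$. Recalling that $I_X$ is the kernel of the augmentation $\epsilon(\sum a_x e_x) = \sum a_x$, the condition $\sum_v c_y^v = 0$ says precisely that $D(e_y) \in I_X$ for all $y$, i.e. $Im(D) \subseteq I_X$, which is the claimed characterization. I would also note that the same conclusion drops out of Proposition~\ref{lem:derivation}: for the trivial quandle one has $\hat z = z$, while $x^{-1}(z)$ equals $X$ when $x = z$ and is empty otherwise, so the stated relation collapses to $c_x^z = c_x^z + \sum_{q \in X} c_y^q$, again forcing $\sum_q c_y^q = 0$.

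The computation is short and no individual step is a genuine obstacle. The one point that deserves care is that the derivation above must be presented as a chain of \emph{equivalences} rather than mere implications, so that the result is an ``if and only if''. This relies on the cancellation being exact, namely that the $D(e_x)$ contribution matches identically on both sides; it is precisely this exactness that makes the vanishing of the augmentation both necessary and sufficient for $D$ to be a derivation.
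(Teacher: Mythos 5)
Your proof is correct and follows essentially the same route as the paper: you expand both sides of the Leibniz rule on basis vectors using $e_u\cdot e_y=e_u$ and $e_x\cdot e_v=e_x$, cancel the $D(e_x)$ term, and conclude that the condition is exactly $\sum_{v} c_y^v=0$, i.e.\ $\mathrm{Im}(D)\subseteq I_X$, which is precisely the paper's computation. The extra remark deducing the same constraint from Proposition~\ref{lem:derivation} is a harmless (and accurate) bonus.
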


\begin{proof}
	For all $x,y \in X$, let $D(e_x)= \sum_{u \in X }  c_x^u e_u$ and $D(e_y)= \sum_{v \in X }  c_y^v e_v$, then $D$ is a derivation if and only if 
	$$
	\sum_{u \in X }  c_x^u e_u= \sum_{u \in X }  c_x^u e_u + (\sum_{v \in X }  c_y^v)e_x.
	$$
	One then obtains that for any fixed $y \in X,$ $\sum_{v \in X }  c_y^v=0$, that is $D(e_y)\in I_X$ for all $y \in X$. The result follows.
	
\end{proof}

The following example shows that there are infinitely many quandle algebras with nontrivial derivations. 

\begin{example}\label{ex:derivationconjugation}
{\rm 
	Let $G$ be a group with nontrivial center $Z(G) \neq {1}$. Let $x\neq 1$ denote a central element. Consider the quandle algebra corresponding to $G$ endowed with conjugation quandle operation, over a field $\mathbbm k$. Then the following map is a derivation of the quandle algebra
	$$
	D_x(e_y) := e_y - e_{yx},
	$$
	and extended by linearity, where juxtaposition of letters denotes multiplication in $G$ (not conjugation) and $\mathbbm k [G]$ is generated by symbols $e_z$ with $z\in G$, as before. In fact, since for all $x,y\in Z(G)$ we have
	$$
	D_y\circ D_x = D_x + D_y - D_{xy},
	$$
	it follows that the derivations of type $D_x$ for $x\in Z(G)$ span an algebra which is associative and commutative, where multiplication is defined by composition of maps. This is a trivial sub-Lie algebra of the Lie algebra of derivations.
	}
\end{example}

Next, we show that quandle algebra derivations of conjugation latin quandles are restrained to have image inside the augmentation ideal, similarly to the case of trivial quandles. 

\begin{proposition}\label{pro:conjugationaugmentation}
    Let $G$ denote a group with conjugation quandle operation $x\rt y = y^{-1}xy$. Suppose that $G$ is a latin quandle. Let $A:= \mathbbm k[G]$ denote the quandle algebra of $G$. Then, if $D$ is a derivation of $A$, we have that $D(A) \subset \mathcal I_G$.
\end{proposition}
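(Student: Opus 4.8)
The plan is to translate the conclusion into the single scalar condition that the augmentation of each $D(e_x)$ vanishes, and then to turn the characterization of Proposition \ref{lem:derivation} into a functional equation that idempotency immediately kills. Writing $D(e_x) = \sum_{z\in G} c_x^z e_z$, set $f(x) := \epsilon\bigl(D(e_x)\bigr) = \sum_{z\in G} c_x^z$. Since $\epsilon$ is linear and the $e_x$ form a basis, one has $\epsilon\bigl(D(a)\bigr)=\sum_x a_x f(x)$ for $a=\sum_x a_x e_x$, so the assertion $D(A)\subset \mathcal I_G$ is equivalent to $f\equiv 0$. The entire problem thus reduces to showing that the scalar $f(x)$ is zero for every $x\in G$.

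Next I would invoke Proposition \ref{lem:derivation}, which yields, for all $x,y,z\in G$,
$$
c_{x\rt y}^z = c_x^{\hat z} + \sum_{q\in x^{-1}(z)} c_y^q ,
$$
where $\hat z = R_y^{-1}(z)$ is the unique element with $\hat z\rt y=z$. Here the latin hypothesis enters: since $L_x$ is a bijection, the fibre $x^{-1}(z)$ is the singleton $\{L_x^{-1}(z)\}$, so the right-hand sum has a single term. The key step is to sum this identity over all $z\in G$. The left side gives $f(x\rt y)$. In the first term, as $z$ ranges over $G$ the element $\hat z=R_y^{-1}(z)$ ranges bijectively over $G$ (right multiplications are bijections by Axiom~(II)), so $\sum_z c_x^{\hat z}=f(x)$; in the second term, as $z$ ranges over $G$ the element $L_x^{-1}(z)$ likewise ranges bijectively over $G$, so $\sum_z c_y^{L_x^{-1}(z)}=f(y)$. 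Hence
$$
f(x\rt y) = f(x) + f(y) \qquad \text{for all } x,y\in G .
$$

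To conclude, I would specialize this additive relation using the first quandle axiom $x\rt x = x$: putting $y=x$ gives $f(x)=f(x\rt x)=f(x)+f(x)=2f(x)$, so $f(x)=0$. Note this holds in any characteristic, since we only subtract $f(x)$ from both sides and never divide by $2$. Therefore $\epsilon\circ D$ vanishes on the basis, and $D(A)\subseteq \mathcal I_G$ as claimed.

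I do not expect a genuine obstacle: the argument is a reindexing/counting trick followed by idempotency. The only point demanding care is the bijectivity bookkeeping in the summation over $z$, namely that both $R_y^{-1}$ and $L_x^{-1}$ permute $G$; this is precisely where Axiom~(II) and the latin hypothesis are used. It is worth remarking that the second term can in fact be summed without the latin assumption, since each $q\in G$ belongs to $x^{-1}(z)$ for the single value $z=x\rt q$; thus the very same computation gives $D(A)\subset \mathcal I_X$ for an \emph{arbitrary} finite quandle $X$, the latin condition serving only to make the fibres $x^{-1}(z)$ singletons and to streamline the argument.
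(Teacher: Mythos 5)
Your proof is correct, and it takes a genuinely different route from the paper's. The paper argues through the special element $e_1$: since $e_1$ is a right unit, Leibniz on $e_x\cdot e_1$ gives $e_x\cdot D(e_1)=0$, connectedness (which follows from the latin hypothesis) is then used to conclude $D(e_1)=0$, and finally Leibniz on $e_1\cdot e_x=e_1$ yields $e_1\cdot D(e_x)=\epsilon(D(e_x))\,e_1=0$, i.e. $D(e_x)\in\mathcal I_G$. You instead sum the coefficient identity of Proposition \ref{lem:derivation} over the output index; this is exactly the statement that $f=\epsilon\circ D$ satisfies $f(e_x\cdot e_y)=f(e_x)+f(e_y)$, which idempotency $x\rt x=x$ kills. (Indeed your argument can be phrased without Proposition \ref{lem:derivation} at all: apply the algebra homomorphism $\epsilon$ to the Leibniz rule and use $\epsilon(e_x)=1$; this also removes the finiteness hypothesis carried by Proposition \ref{lem:derivation}, and the sums you manipulate are finite anyway since each $D(e_x)$ has finite support.) What each approach buys: the paper's argument additionally yields the pointwise fact $D(e_1)=0$, which is specific to the conjugation structure and is what the subsequent remark in the paper is really about; your argument, as you correctly note at the end, needs neither the latin hypothesis nor the conjugation structure, and proves the stronger statement that $D(\mathbbm{k}[X])\subset I_X$ for \emph{every} quandle $X$ in every characteristic. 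This is consistent with all the tables in Section~\ref{Computations} (every derivation matrix listed there has vanishing column sums), so the paper's remark that the statement ``does not hold'' for non-connected $G$ should be read as referring only to the failure of $D(e_1)=0$, not of the containment in the augmentation ideal; your more general argument makes this distinction transparent.
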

\begin{proof}
    First we show that $D(e_1) = 0$, where $1$ is the unit of $G$, and $e_1$ is the corresponding generator of $A$. Since $e_1$ is a right unit for the algebra $A$, the Leibniz rule gives that $D(e_x) = D(e_x\cdot e_1) = D(e_x)\cdot e_1 + e_x\cdot D(e_1)$, which implies that $e_x\cdot D(e_1) = 0$ for all $x\in G$. Since $G$ is connected, there exists a $g\in G$ such that the conjugation action of $G$ on $g$ gives the whole group $G$. This implies that, writing $D(e_1) = \sum_u c_1^ue_u$, $e_g\cdot D(e_1) = \sum_u c_1^u e_{u^{-1}gu}$ where the terms $e_{u^{-1}gu}$ are all different from each other. It follows that $D(e_1) = 0$. Since $e_1\cdot e_x = e_1$ for all $x\in G$, we have that $D(e_1) = D(e_1\cdot e_x )= D(e_1)\cdot e_x + e_1\cdot D(e_x)$ from which $e_1\cdot D(e_x) = 0$. In order for this to happen, we need that $D(e_x)$ is in the augmentation ideal of $A$, which concludes the proof.
\end{proof}

\begin{remark}
    {\rm 
    In fact, the previous argument to show that $D(e_1) = 0$ can be applied, under the same conditions, to any element $x\in Z(G)$, the center of the group $G$.
    }
\end{remark}

\begin{remark}
    {\rm 
    Computations in Section~\ref{Computations} show that the statement of Proposition~\ref{pro:conjugationaugmentation} in general does not hold when $G$ is not connected. Indeed, also $D(e_1) = 0$ needs not be true.
    }
\end{remark}

\subsection{Dihedral Quandle Algebra Derivations}

The following theorem gives a complete characterization of derivations of the quandle algebra corresponding to the dihedral quandle of  arbitrary cardinality. 
\begin{theorem}\label{pro:dihedralderivations}
Let $X = \Z_n$ be a dihedral  quandle and let $\mathcal{A} = \mathbbm k [X]$ be its quandle algebra, where $char\ \mathbbm k = 0$. Then,
\begin{enumerate}[label=\rm \arabic*)]
    \item If $n$ is odd, the derivation algebra $\mathfrak{D}(\mathcal{A})$ is trivial;
    \item If $n = 4k$ for some $k$, the coefficients $c_i^j$ of arbitrary matrices in $\mathfrak{D}(\mathcal{A})$ are determined by the symmetries $c_{t+2d}^x = c_t^{2t+2d-x}$, $c_t^x = -c_t^{x+2k}$ and $c_t^x = c_{t+2k}^{x+2k}$, for all $x,t,d$.
\item 
If $n = 2k$ for some odd $k$, the coefficients $c_i^j$ of matrices in the derivation algebra $\mathfrak{D}(\mathcal{A})$ are characterized by the symmetries $c_{t+2d}^x = c_t^{2t+2d-x}$, and $c_t^x = -c_t^{x+k}$.
\end{enumerate}
In particular, the symmetries imply that $c_t^{k+t} = 0$ for all $t$. 
\end{theorem}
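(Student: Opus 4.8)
The plan is to specialize Proposition~\ref{lem:derivation} to the dihedral operation $x \rt y = 2y-x$ and then analyze the resulting linear system according to the parity and divisibility of $n$. Here the unique solution of $\hat z \rt y = z$ is $\hat z = 2y-z$, and $x^{-1}(z) = \{w : 2w = x+z\}$, so the characterizing relation reads
\[
c_{2y-x}^{z} = c_x^{2y-z} + \sum_{2w = x+z} c_y^{w}
\]
for all $x,y,z\in\Z_n$. The decisive structural fact is that the congruence $2w = x+z$ in $\Z_n$ has a \emph{unique} solution when $n$ is odd, while for $n$ even it has either no solution or exactly two solutions $w_0,\,w_0+n/2$, according to the parity of $x+z$. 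This dichotomy is what separates the three cases, and it is where I expect the hypothesis $\mathrm{char}\,\mathbbm k = 0$ and the $4\mid n$ condition to enter.

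For $n$ odd I would pass to the diagonals $d_m(r):=c_r^{\,r+m}$. In these coordinates the displayed relation becomes $d_m(u) = d_{-m}(2y-u) + d_{m/2}(y)$, valid for all $u,y,m$ (here $m/2$ makes sense because $2$ is invertible). Summing over $u\in\Z_n$ collapses the first two terms into a single constant and leaves $n\,d_{m/2}(y)$ independent of $y$; since $n\neq 0$ in $\mathbbm k$, every diagonal is \emph{constant} in the row, say $d_\ell(r)=\gamma_\ell$. Feeding this back yields the single family $\gamma_p = \gamma_{-p} + \gamma_{p/2}$ with $\gamma_0=0$. Comparing this with its reflection in $p$ forces $\gamma$ to be an \emph{odd} function, whence $2\gamma_p=\gamma_{p/2}$, i.e.\ $\gamma_{p/2^j}=2^j\gamma_p$; iterating around the finite orbit of doubling gives $\gamma_p = 2^{\mathrm{ord}(2)}\gamma_p$, and $2^{\mathrm{ord}(2)}-1\neq 0$ in characteristic zero forces $\gamma\equiv 0$. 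This proves part~(1).

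For the even cases the two-element fibres make the system genuinely underdetermined, and the task becomes to show that its solution space is exactly the one cut out by the stated symmetries. First I would read off, from the relations with $x+z$ \emph{odd} (empty sum), the identity $c_a^b = c_{2y-a}^{\,2y-b}$ for opposite-parity $a,b$ and all $y$; specializing $y$ so that $2y-a=t$ gives the first symmetry $c_{t+2d}^{x}=c_t^{2t+2d-x}$ on the opposite-parity locus. The crux, and the step I expect to be hardest, is establishing the anti-symmetry ($c_t^x=-c_t^{x+2k}$ for $n=4k$, resp.\ $c_t^x=-c_t^{x+k}$ for $n=2k$): this is precisely what makes the sum $c_y^{w_0}+c_y^{w_0+n/2}$ collapse, after which the same-parity instances of the first symmetry follow, and for $n=4k$ the translation symmetry $c_t^x=c_{t+2k}^{x+2k}$ drops out as well. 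I would then verify sufficiency — that any matrix obeying these symmetries solves the system — to obtain the characterizations in parts~(2)--(3).

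Finally, the vanishing $c_t^{t+k}=0$ comes from playing the reflection part of the first symmetry (take $d=0$, giving $c_t^x=c_t^{2t-x}$) against the anti-symmetry. The reflection $x\mapsto 2t-x$ sends the column $t+k$ to $t-k$, so $c_t^{t+k}=c_t^{t-k}$; in the $n=4k$ case the anti-symmetry with shift $2k$ gives $c_t^{t-k}=-c_t^{t-k+2k}=-c_t^{t+k}$, and combining yields $2c_t^{t+k}=0$, hence $c_t^{t+k}=0$ since $\mathrm{char}\,\mathbbm k=0$. The subtlety I would flag is exactly the fixed-point set $\{t,\,t+n/2\}$ of this reflection: the argument closes precisely because $t+k$ is \emph{not} fixed, which is automatic when $n=4k$. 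For $n=2k$ the target column $t+k=t+n/2$ is itself a reflection fixed point, so there the vanishing must be extracted from the remaining relations rather than from the two symmetries alone, and this is the delicate endpoint of the proof.
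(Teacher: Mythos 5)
Your treatment of case (1) is complete and is genuinely different from the paper's. Where the paper chains together a long sequence of substitutions of the relation $c^m_{2j-i}=c^{2j-m}_i+c^{(m+i)/2}_j$ until it lands on $3c^m_{m+p}=0$, you pass to the diagonals $d_m(r)=c_r^{r+m}$, average the relation $d_m(u)=d_{-m}(2y-u)+d_{m/2}(y)$ over $u\in\Z_n$ to force every diagonal to be constant (using $n\neq 0$ in $\mathbbm k$), and then solve the scalar recursion $\gamma_m=\gamma_{-m}+\gamma_{m/2}$: oddness gives $2\gamma_m=\gamma_{m/2}$, and running around the orbit of the doubling map gives $(2^{\operatorname{ord}(2)}-1)\gamma_m=0$, hence $\gamma\equiv 0$. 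This is shorter and more transparent than the paper's argument, and it isolates exactly where characteristic zero is used.

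For cases (2) and (3), however, there is a genuine gap: you only \emph{prove} the reflection symmetry $c_{t+2d}^x=c_t^{2t+2d-x}$ on the opposite-parity locus (empty fibre of $2w=x+z$), and everything else is announced rather than derived. The anti-symmetry $c_t^x=-c_t^{x+2k}$ (resp.\ $c_t^x=-c_t^{x+k}$), the translation symmetry $c_t^x=c_{t+2k}^{x+2k}$ for $n=4k$, the same-parity instances of the reflection symmetry, and the sufficiency of the symmetry set are precisely the content of the paper's proof: it writes the two-element-fibre relation $c_{t+2d}^x=c_t^{2t+2d-x}+c_{t+d}^{(t+x)/2}+c_{t+d}^{(t+x)/2+2k}$, combines several specializations ($d=0$ and $d=k$, the substitutions $x\mapsto 2t+2k-x$ and $t\mapsto t+2k$) to force the collapse $c_{t+d}^{(t+x)/2}=-c_{t+d}^{(t+x)/2+2k}$, and then runs a case analysis on the parities of $(x+t)/2$ and of $d$ to extend the reflection symmetry to all $x,t,d$; the divergence between $n=4k$ and $n=2k$ with $k$ odd appears only in that analysis. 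Calling this ``the crux, and the step I expect to be hardest'' does not discharge it, so parts (2)--(3) of your proposal are a plan, not a proof. Your closing remark about the $n=2k$ endpoint is apt --- the two listed symmetries alone indeed do not force $c_t^{t+k}=0$ there (for instance $c_t^x=\delta_{x,t}-\delta_{x,t+k}$ satisfies both symmetries), so the vanishing has to be pulled out of the remaining derivation relations --- but you leave that extraction, like the main symmetries, undone.
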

\begin{proof}
    Let us start with the first case, and let us take a derivation $D$.
    Since the order of $X$ is odd, $2$ is invertible modulo $n$. The characterization of derivations given in Proposition~\ref{lem:derivation} becomes
    $$
        c^m_{2j-i} = c^{2j-m}_i + c^{\frac{m+i}{2}}_j,
    $$
    for all $m,i,j\in X$. If we set $m\rt j = i+p$ for some $p$, we obtain 
    \begin{eqnarray}
    c^{m}_{m+p} &=& c^{2j-m}_{m\rt j-p} + c^{j-\frac{1}{2}p}_j,\label{eqn:m+p}
    \end{eqnarray}   
    while setting $m\rt j = i-p$,  we obtain the equation 
    \begin{eqnarray}
    c^{m}_{m-p} = c^{2j-m}_{m\rt j+p} + c^{j+\frac{1}{2}p}_j.\label{eqn:m-p}
    \end{eqnarray}
    Now, the term $ c^{2j-m}_{m\rt j+p}$ in \eqref{eqn:m-p} can be rewritten, by means of \eqref{eqn:m+p} with $m\rt j$ substituted for $m$, as
    $$
    c^{2j-m}_{m\rt j+p} = c^{j-\frac{1}{2}p}_j + c^m_{m-p}.
    $$
    Substituting the latter in \eqref{eqn:m-p}, we obtain that $c^{j-\frac{1}{2}p}_j + c_j^{j+\frac{1}{2}p} = 0$. 
    
    The latter equation, can be rewritten as 
    \begin{eqnarray}\label{eqn:plusminusqup}
    c_j^{j-q} + c_j^{j+q} &=& 0,
    \end{eqnarray}
    
    and clearly implies also the equation 
    
    \begin{eqnarray}\label{eqn:plusminusqdown}
    c^j_{j-q} + c^j_{j+q} &=& 0,
    \end{eqnarray}
    since with $j' = j- q$ we have $c_{j+q}^j = c_{j'}^{j'-q} = - c_{j'}^{j'+q} = - c_{j-q}^j$.
    
    Let us rewrite \eqref{eqn:m+p} with $2p$ instead of $p$, to get 
    \begin{eqnarray}\label{eqn:m+p-p<->2p}
    c_{m+2p}^m &=& c_j^{j-p}+ c_{2j-m-2p}^{2j-m},
    \end{eqnarray}
    while if we rewrite \eqref{eqn:m+p} with $m-p$ in place if $m$, we obtain 
     \begin{eqnarray}\label{eqn:m+p-m<->m-p}
    c_m^{m-p} &=& c_j^{j-\frac{1}{2}p} + c_{2j-m}^{2j-m+p}.
    \end{eqnarray}
     Consequently, the term $c_j^{j-p}$ in \eqref{eqn:m+p-p<->2p} can be rewritten by means of \eqref{eqn:m+p-m<->m-p} as $c_j^{j-p} = c_k^{k-\frac{1}{2}p} + c_{2k-j}^{2k-j+p}$, and \eqref{eqn:m+p-p<->2p} becomes
    \begin{eqnarray}\label{eqn:m+p-p<->2p'}
    c_{m+2p}^m &=& c_k^{k-\frac{1}{2}p} + c_{2k-j}^{2k-j+p} + c_{2j-m-2p}^{2j-m}.
    \end{eqnarray}
    Taking $j=m$ and using \eqref{eqn:plusminusqdown} to rewrite $c_{m-2p}^m$ gives the equation
    \begin{eqnarray}\label{eqn:2left}
    2c_{m+2p}^m &=& c_k^{k-\frac{1}{2}p} + c_{2k-m}^{2k-m+p}.
    \end{eqnarray}
    
    Now, if we apply \eqref{eqn:m+p-m<->m-p} to the RHS of \eqref{eqn:2left}, where we use $j$ instead of $k$, we get
    \begin{eqnarray}\label{eqn:2leftfinal}
    2c_{m+2p}^m &=& c_m^{m-p}.
    \end{eqnarray}
    We observe that \eqref{eqn:2leftfinal} can be equivalently rewritten also as $2c_{m+p}^m = c_m^{m-\frac{1}{2}p}$ and $2c_{m-p}^m = c_m^{m+\frac{1}{2}p}$. We will refer to the latter forms of \eqref{eqn:2leftfinal} as \eqref{eqn:2leftfinal}, without further specification. Let us now consider \eqref{eqn:2left} with $j = k$, where we rewrite the rightmost term by means of \eqref{eqn:2leftfinal}, and derive the equality $2c_{m+2p}^m = c_j^{j-\frac{1}{2}p}+2c_{2j-m-2p}^{2j-m}$. From the latter follows
    \begin{eqnarray}\label{eqn:4left}
    4c_{m+2p}^m &=& c_m^{m-\frac{1}{2}p},
    \end{eqnarray}
    having used $m=j$ and \eqref{eqn:plusminusqdown}. Now, we obtain $4c_{m+2p}^m = 2c_{m+p}^m$ and, therefore, the equation 
    \begin{eqnarray}\label{eqn:1/2}
    2c_{m+2p}^m = c_{m+p}^m,
    \end{eqnarray}
    which is readily rewritten also as 
    \begin{eqnarray}\label{eqn:updown}
    c_m^{m-p} &=& c^m_{m+p}.   
    \end{eqnarray}

    Observe that \eqref{eqn:updown} along with \eqref{eqn:plusminusqdown} gives also the symmetry
    \begin{eqnarray}\label{eqn:updown'}
    c_{m+p}^m + c_m^{m+p} &=& 0,
    \end{eqnarray}
    and, moreover, \eqref{eqn:1/2} can be also seen to imply 
    \begin{eqnarray}\label{eqn:1/2superscript}
    2c_m^{m+2p} &=& c_m^{m+p}.
    \end{eqnarray}
    
    Let us now consider \eqref{eqn:m+p-p<->2p}, and let us take $m+2p$ in place of $j$, to get the equation 
    \begin{eqnarray}\label{eqn:m+p-p<->2pandj<->m+2p}
    c_{m+2p}^m &=& c_{m+2p}^{m+p} + c_{m+2p}^{m+4p}.
    \end{eqnarray}
    By means of \eqref{eqn:1/2superscript} applied twice, we have that $c_{m+2p}^{m+4p} = \frac{1}{4}c_{m+2p}^{m+p}$. So, \eqref{eqn:m+p-p<->2pandj<->m+2p} becomes $c_{m+2p}^m = \frac{5}{4}c_{m+2p}^{m+p} = \frac{5}{4}c_{m+p}^m$, where the last equality is a consequence of \eqref{eqn:updown}. But \eqref{eqn:1/2} gives that $4c_{m+2p}^m = 2c_{m+p}^m$, so we find that $3c_{m+p}^m = 0$, from which $c_k^t = 0$ for all $k,t$, implying that $D$ is the trivial derivation. 
    
    Let us now suppose that $n = 4k$ for some $k$. We distinguish the cases when the derivation $D$ is evaluated on a basis vector $e_t$ of $A$ with even and odd $t\in X$. Let us set, as before, $D(e_x) = \sum_\ell c_x^\ell e_\ell$. 
   
   We apply Leibniz rule to a product of arbitrary basis vectors $e_t\cdot e_{t+d}$. Then we have, by definition, the equality 
   $$\sum_x c_{t+2d}^x e_x = \sum_y c_t^y e_{2t +2d-y} + \sum_zc_{t+d}^z e_{2z-t}.$$
   For even $t$, applying Proposition~\ref{lem:derivation} 
   we see that there are two subcases, corresponding to the parity of $x$, since the set $z^{-1}(x)$ is either empty, or has two elements. Specifically, when $x$ is odd, there is no $z$ such that $2z-t = x$, so that we have an equality
   \begin{eqnarray}
   c_{t+2d}^x &=& c_t^{2t+2d-x}, \label{eqn:evenodd}
   \end{eqnarray}
   for all even $x$, even $t$ and all $d\in \Z_{4k}$. When $x$ is odd there are exactly two values of $z$ that solve the equation $2z-t = x$, namely $\frac{t+x}{2} + 2ik$, $i = 0,1$. The corresponding equation is therefore
   \begin{eqnarray}
    c_{t+2d}^x &=& c_t^{2t+2d-x} + c_{t+d}^{\frac{t+x}{2}} + c_{t+d}^{\frac{t+x}{2}+2k}, \label{eqn:eveneven}
   \end{eqnarray}
   for all even $t, x$ and all $d\in Z_{4k}$. By direct inspection, it is easy to see that when $t$ is odd we obtain the same equations where the roles of $x$ even and odd is exchanged. Explicitly one gets \eqref{eqn:evenodd} when $x$ is even, and \eqref{eqn:eveneven} when $x$ is odd. Observe that $\frac{x+t}{2}$ in \eqref{eqn:eveneven} does not depend on $d$, as already seen in  Proposition~\ref{lem:derivation}, 
   but it only depends on the solutions to the equation $t\rt z = x$ with respect to $z$. We now consider the case $t$ even, as the same discussion can be applied, mutatis mutandis, to the odd case, just by inverting the roles of the parities of $x$. 
   
   First, observe that from \eqref{eqn:evenodd} with $d=0$ we have $c_t^x = c_{t}^{2t-x}$, while substituting $t+2k$ for $t$ and taking $d=k$ we obtain the equation $c_t^x= c_{t+2k}^{2t+2k-x}$. Both equations combined give us 
   \begin{eqnarray}
   c_t^r &=& c_{t+2k}^{r+2k}
   \end{eqnarray}
   for all odd $r$, and even $x$. 
   
   Then, we want to show that \eqref{eqn:evenodd} holds indeed when $x$ is even as well. To do so, let us determine values $x'$, $t'$ and $d'$ such that $c_{t'+2d'}^{x'} = c_t^{2t+2d-x}$ so that we can substitute in \eqref{eqn:eveneven} with $t,x,d$. We choose $t' = t+2d$, $d' = - d$ and $x' = 2t+2d-x$ and substitute $c_{t'+2d'}^{x'}$ determined by \eqref{eqn:eveneven} for $t',x',d'$ into the initial \eqref{eqn:eveneven} with $t,x,d$. We obtain 
   \begin{eqnarray}
   c_t^{2t+2d-x} &=& c_{t+2d}^x + c_{t+d}^{\frac{3t-x}{2}+2d} + c_{t+d}^{\frac{3t-x}{2}+2d+ 2k},
   \end{eqnarray}
   which substituted in \eqref{eqn:eveneven} gives 
   \begin{eqnarray}
   c_{t+d}^{\frac{3t-x}{2}+2d} + c_{t+d}^{\frac{3t-x}{2}+2d+ 2k} + c_{t+d}^{\frac{t+x}{2}} + c_{t+d}^{\frac{t+x}{2}+2k} &=& 0.\label{eqn:subt+d}
   \end{eqnarray}
   
   Let us now consider \eqref{eqn:eveneven} with $d=k$. Namely
   \begin{eqnarray}
   c_{t+2k}^x &=& c_t^{2t+2k-x} + c_{t+k}^{\frac{x+t}{2}} + c_{t+k}^{\frac{x+t}{2}+2k}. \label{eqn:evenevendk}
   \end{eqnarray}
   Replace now $x$ with $2t+2k-x$, which is even as well, to obtain the equation 
   \begin{eqnarray*}
   c_{t+2k}^{2t+2k-x} &=& c_t^x + c_{t+k}^{\frac{3t-x+2k}{2}} + c_{t+k}^{\frac{3t-x+2k}{2}+2k},
   \end{eqnarray*}
   which now we rewrite by taking $t+2k$ instead of $t$ to obtain
   \begin{eqnarray*}
   c_t^{2t+2k-x} &=& c_{t+2k}^x + c_{t+3k}^{\frac{3t-x}{2}} + c_{t+3k}^{\frac{3t-x}{2}+2k}.
   \end{eqnarray*}
   We sum the last equation with \eqref{eqn:evenevendk} to obtain 
   \begin{eqnarray*}
   c_{t+k}^{\frac{x+t}{2}} + c_{t+k}^{\frac{x+t}{2}+2k} + c_{t+3k}^{\frac{3t-x}{2}} + c_{t+3k}^{\frac{3t-x}{2}+2k} &=& 0.
   \end{eqnarray*}
   If we substitute $t+k$ instead of $t$, and $x-k$ instead of $x$ in the preceding equation we obtain
   \begin{eqnarray}
   c_{t+2k}^{\frac{x+t}{2}} + c_{t+2k}^{\frac{x+t}{2}+2k} + c_t^{\frac{3t-x}{2}+2k} + c_t^{\frac{3t-x}{2}} &=& 0. \label{eqn:eveneven+}
   \end{eqnarray}
  From \eqref{eqn:subt+d} (with $d=0$) and \eqref{eqn:eveneven+} combined it follows that 
  \begin{eqnarray*}
  c_t^{\frac{x+t}{2}} + c_t^{\frac{x+t}{2}+2k} + c_{t+2k}^{\frac{x+t}{2}} + c_{t+2k}^{\frac{x+t}{2}+2k}&=& 0,
  \end{eqnarray*}
   which, assuming that $\frac{x+t}{2}$ is odd and applying the symmetry $c_t^r = c_{t+2k}^{r+2k}$ previously proved for odd $r$, gives us $2c_{t+2k}^{\frac{t+x}{2}} + 2c_{t+2k}^{\frac{t+x}{2}+2k} = 0$, whence $c_{t+2k}^{\frac{t+x}{2}} = - c_{t+2k}^{\frac{t+x}{2}+2k}$ since we are in characteristic different from $2$. From \eqref{eqn:eveneven} we obtain that whenever $x+t$ is not divisible by $4$, it holds that $c_{t+2d}^x = c_t^{2t+2d-x}$ for all $d$. In fact, the same reasoning gives that $c_{t+2k}^{\frac{3t-x}{2}} = - c_{t+2k}^{\frac{3t-x}{2}+2k}$. But, all the odd elements in $\Z_{4k}$ can be written as $\frac{x+t}{2}$ or $\frac{3t-x}{2}$ with $x$ and $t$ even. In particular, 
   \begin{eqnarray}
   c_t^r &=& - c_t^{r+2k} \label{eqn:oddopposite}
   \end{eqnarray}
   for all even $t$ and odd $r$.  
   
   We now focus on the case when $\frac{x+t}{2}$ is even, i.e. when either both $x$ and $t$ are divisible by $4$ or neither of them is. Take $d$ odd arbitrary, then it follows that $\frac{t-2d+x}{2}$ is odd and we can replace $t$ with $t' := t-2d$, from which we have that $c_t^x = c_{t'+2d}^x = c_{t'}^{2t'+2d-x}$. So we have obtained $c_t^x = c_{t-2d}^{\frac{2t-2d-x}{2}}$ which, upon changing $t$ with $t+2d$ gives us $c_{t+2d}^x = c_t^{2t+2d-x}$, as required. It follows that we have also proved, using \eqref{eqn:eveneven}, that $c_{t+d}^{\frac{x+t}{2}} = - c_{t+d}^{\frac{x+t}{2}+2k}$, when $d$ is odd and $\frac{x+t}{2}$ is even. Observe that in fact this would follow directly by applying the same proof of the case $\frac{x+t}{2}$ odd with $t$ even to the analogous equation to \eqref{eqn:evenodd} for $t$ is odd. We have decided to show that the cases $t$ even and odd can be handled independently and proved that $c_{t+2d}^x = c_t^{2t+2d-x}$ holds true when $\frac{x+t}{2}$ is even and $d$ is odd without using the case $t$ odd. 
   
   To complete proving our claim, we need to show that $c_{t+2d}^x = c_t^{2t+2d-x}$ holds when $\frac{x+t}{2}$ is even and $d$ is even. We have, from previous cases, that 
   $$
   c_{t+2d}^x = c_{t+2+2(d-1)}^x
   = c_{t+2}^{2(t+2)+2(d-1)-x}
   = c_{t+2}^{2t+2(d+1)-x}.
   $$
   Then,
   $$
   c_{t+2}^{2t+2(d+1)-x} = c_{t+2+2d'}^{x+2d'+2-2d}
   = c_{t+2(d'+1)}^{x+2(d'+1)-2d} 
   = c_t^{2t+2d-x},
   $$
   where in the first equality we have used \eqref{eqn:eveneven} with $x' = 2t +2 -x$, $t' = t+2$ which imply $\frac{x'+t'}{2}$ even and it holds for arbitrary $d'$, the second and third equalities are simply obtained by rebracketing and applying the elementary operations, and the last equality is again \eqref{eqn:eveneven} where we use the fact that $d'$ is arbitrary, and can therefore be chosen odd, which allows us to apply one of the previous cases. This completes the claim that $c_{t+2d}^x = c_t^{2t+2d-x}$ for all even $t$, and all $x$ and $d$. Therefore, by an easy inspection, $c_t^x = -c_t^{x+2k}$, and $c_t^x = c_{t+2k}^{x+2k}$ for all $x$ and all even $t$. 
   
   An identical procedure shows that the same symmetries apply when $t$ is odd, as the equations for this case are identical (but with $x$ exchanged) to the even $t$ case.
   
   Now the result follows. In fact, from the symmetry $c_t^x = c_{t+2k}^{x+2k}$ we see that the matrix representing a derivation $D$ has diagonal blocks equal, and the same holds for anti-diagonal blocks.  From $c_{t}^x = - c_{t}^{x+2k}$ it follows that the top left and top right blocks are the negative of each other. This fact clearly implies the same result for the lower blocks, by virtue of the symmetry $c_t^x = c_{t+2k}^x$. These symmetries characterize the derivations of $\mathbbm k[\Z_{4k}]$ along with the equations $c_{t+2d}^x = c_t^{2t+2d-x}$.
   
   Let us now consider the case $n = 2k$. Proceeding as in the previous case, we see that derivations are characterized by the equations 
   \begin{eqnarray}
   c_{t+2d}^x &=& c_t^{2t+2d-x}, \label{eqn:evenoddk}
   \end{eqnarray}
   for even $t$ and odd $x$, and 
   \begin{eqnarray}
    c_{t+2d}^x &=& c_t^{2t+2d-x} + c_{t+d}^{\frac{t+x}{2}} + c_{t+d}^{\frac{t+x}{2}+k}, \label{eqn:evenevenk}
   \end{eqnarray}
   for even $t$ and even $x$. When $t$ is odd, we obtain identical equations with the parities of $x$ exchanged. Let us consider the case $t$ even. Observe that \eqref{eqn:evenevenk} with $x$ substituted with $2t-x$ (and $d=0$) gives the equation
   \begin{eqnarray}
    c_t^{2t-x} &=& c_t^x + c_t^{\frac{3t-x}{2}} + c_t^{\frac{3t-x}{2}+k} \label{eqn:evenevenk3t}.
   \end{eqnarray}
   When $x$ and $t$ are such that $\frac{x+t}{2}$ is odd, then by \eqref{eqn:evenoddk} applied to $t$ and $\frac{x+t}{2}$ instead of $x$, we get
   $$
    c_t^{\frac{x+t}{2}} = c_t^{2t-\frac{x+t}{2}} = c_t^{\frac{3t-x}{2}}
   $$
   Using this in \eqref{eqn:evenevenk3t} and \eqref{eqn:evenevenk} gives that $2c_t^{\frac{3t-x}{2}} + 2c_t^{\frac{3t-x}{2}+k} = 0$, which implies that $c_t^{\frac{3t-x}{2}} = -c_t^{\frac{3t-x}{2}+k}$. By change of variables one sees that we also have $c_t^{\frac{x+t}{2}} = -c_t^{\frac{x+t}{2}+k}$, and therefore $c_t^x = c_t^{2t-x}$ whenever $x$ and $t$ are even such that $\frac{x+t}{2}$ is odd. An analysis as in the case $4k$ distinguishing the cases based on the parity of $\frac{x+t}{2}$ and $d$ shows that for all even $x, t$, and for all $d$ we have $c_{t+d}^{\frac{x+t}{2}} = - c_t^{\frac{x+t}{2}+k}$ and $c_{t+2d}^x = c_t^{2t+2d-x}$. The case when $t$ is odd is treated similarly, making use of the even $t$ case. 
   \end{proof}
In the following, we describe the derivation matrices of dihedral quandle algebras.
\begin{corollary}\label{cor:periodic4}
Let $X = \Z_n$ be a dihedral quandle and let $\mathcal{A} = \mathbbm k [X]$ be its quandle algebra, where $char\ \mathbbm k = 0.$ 
Then, 
\begin{enumerate}[label=\rm \arabic*)]
    \item 
    If $n = 4k$, then the matrix of any derivation $D$ can be written as 
    $$D= \left( \begin{array}{rr}
 P & -P\\
-P & P
\end{array} \right)$$
for some matrix $P$ of size $2k\times 2k$;
\item 
If $n = 2k$ for some odd $k$, then the matrix of any derivation $D$ can be written as
$$D= \left( \begin{array}{rr}
 R & -R 
\end{array} \right),$$
for some matrix $R$ of size $k\times k$. 
\end{enumerate}
\end{corollary}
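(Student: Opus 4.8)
The plan is to read the block shape of the representing matrix straight off the symmetries already established in Theorem~\ref{pro:dihedralderivations}, so that no fresh manipulation of the Leibniz rule is required; the whole corollary reduces to bookkeeping with indices in $\Z_n$. Throughout I would fix the convention that the matrix of a derivation $D$ has $(t,x)$-entry equal to $c_t^x$, with the subscript $t$ indexing rows and the superscript $x$ indexing columns (this is the convention that makes the displayed block-row form in the statement a horizontal splitting). I then partition $\Z_n$ into consecutive blocks of equal size chosen so that the additive column shifts occurring in the symmetries, namely $x \mapsto x+2k$ in the first case and $x \mapsto x+k$ in the second, carry one block onto the next. With this choice the symmetries say exactly that the blocks coincide up to sign.

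For $n = 4k$ I would split $\Z_{4k}$ into the halves $\{0,\dots,2k-1\}$ and $\{2k,\dots,4k-1\}$, producing a $2\times 2$ decomposition of $D$ into $2k\times 2k$ blocks. Writing $P$ for the top-left block, the symmetry $c_t^x = c_{t+2k}^{x+2k}$ identifies the bottom-right block with $P$, since adding $2k$ to both indices sends the first half onto the second. The symmetry $c_t^x = -c_t^{x+2k}$ then gives the top-right block as $-P$, as it negates an entry under a shift of the column index alone. The bottom-left block is the only one not handed over by a single relation: I would obtain it by chaining the two, applying the column-shift relation inside the second half of rows and then the diagonal relation $c_t^x = c_{t+2k}^{x+2k}$, which shows it equals $-P$ as well. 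This produces the claimed form with $P$ of size $2k\times 2k$. I would note that the third symmetry $c_{t+2d}^x = c_t^{2t+2d-x}$ is irrelevant to the \emph{shape}; it only constrains the internal entries of $P$.

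For $n = 2k$ with $k$ odd the argument is shorter, because the relations do not include a clean identification of rows differing by $k$ (since $k$ is odd it is never of the form $2d$ in $\Z_{2k}$, so the reflection relation $c_{t+2d}^x = c_t^{2t+2d-x}$ never shifts a row by $k$), and hence no second $2\times 2$ splitting appears. The only relevant relation is $c_t^x = -c_t^{x+k}$, which pairs the column $x\in\{0,\dots,k-1\}$ with the column $x+k\in\{k,\dots,2k-1\}$, the latter being the negative of the former. Splitting $D$ into two block-columns along this partition yields the claimed block-row form, with $R$ the block of the first $k$ columns (so that $R$ has size $2k\times k$, being a $2k\times 2k$ endomorphism cut into two vertical halves).

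I do not expect any genuine obstacle, since the corollary is a direct translation of Theorem~\ref{pro:dihedralderivations}; the only step requiring real care is the bottom-left block in the $4k$ case, which is not immediate from one symmetry and must be deduced by composing the column-shift relation with the diagonal relation. The remaining difficulty is purely notational: one must commit to whether the subscript or superscript of $c_t^x$ labels rows, and then choose the index partition so that the relevant shift maps the first block onto the second; with the opposite ordering the identical symmetries still yield the stated matrices, but only after a permutation of the basis vectors.
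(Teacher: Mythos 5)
Your proposal is correct and follows essentially the same route as the paper: both read the block structure directly from the symmetries of Theorem~\ref{pro:dihedralderivations}, using $c_t^x = c_{t+2k}^{x+2k}$ to equate the diagonal (and anti-diagonal) blocks and $c_t^x = -c_t^{x+2k}$ to fix the sign, with your chained deduction for the bottom-left block being exactly the content of the paper's observation that the first symmetry already forces the two off-diagonal blocks to coincide. Your remark that in the $n=2k$ case $R$ is a $2k\times k$ block of the square matrix (rather than $k\times k$ as written in the statement) is also correct; the stated size is evidently a slip, since $\left( \begin{array}{rr} R & -R \end{array} \right)$ must be $2k\times 2k$.
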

\begin{proof}
From Theorem~\ref{pro:dihedralderivations}, case $2$, we have that the matrix of a derivation $D$ satisfies the symmetries 
\begin{eqnarray}
 c_t^x &=& c_{t+2k}^{x+2k} \label{eqn:symmetry14k}\\
 c_t^x &=& -c_t^{x+2k}. \label{eqn:symmetry24k}
\end{eqnarray}
From Equation~(\ref{eqn:symmetry14k}) it follows that $D$ is of the form 
$$D= \left( \begin{array}{rr}
 P & Q\\
Q & P
\end{array} \right),$$
while from Equation~(\ref{eqn:symmetry24k}) we have that $Q = -P$. This completes the proof of $1$. 

From Theorem~\ref{pro:dihedralderivations}, case $2$, we have that Equation~(\ref{eqn:symmetry24k}) holds (although Equation~(\ref{eqn:symmetry14k} does not). Therefore, we can write the matrix of a derivation $D$ as 
$$D= \left( \begin{array}{rr}
 R & -R 
\end{array} \right),$$
for some $R$. 
\end{proof}

\begin{corollary}
With the above notations,  let $n = 4k$ and  $k$  even. We have symmetries $c_t^x = c_{t+k}^{x+k}$. Precisely, the derivations of $A$ can be written as matrices of the form
$$D= \left( \begin{array}{rrrr}
U & V & -U& -V\\
-V &U &V & -U \\ 
-U & -V & U& V\\
V &-U &-V& U 
\end{array} \right),$$
where $U$ and $V$ are $k$ by $k$ matrices.
\end{corollary}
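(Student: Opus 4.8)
The plan is to extract one extra symmetry, namely $c_t^x = c_{t+k}^{x+k}$, and then substitute it into the block decomposition already obtained in Corollary~\ref{cor:periodic4}. Everything follows from the master relation $c_{t+2d}^x = c_t^{2t+2d-x}$ of Theorem~\ref{pro:dihedralderivations} (case $2$), once the parity of $k$ is used to make the right choice of $d$.

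First I would note that, since $k$ is even, $d = k/2$ is an integer and hence an admissible choice in the relation $c_{t+2d}^x = c_t^{2t+2d-x}$; this is precisely the choice that is unavailable when $k$ is odd, and is where the parity hypothesis enters. Taking $d = k/2$ gives $c_{t+k}^x = c_t^{2t+k-x}$ for all $t,x$, and replacing $x$ by $x+k$ yields
\begin{equation*}
c_{t+k}^{x+k} = c_t^{2t-x}.
\end{equation*}
On the other hand, the same master relation with $d = 0$ reads $c_t^x = c_t^{2t-x}$. Combining the two identities produces the sought symmetry $c_{t+k}^{x+k} = c_t^x$ for all $t,x$. This is the only genuinely new input; the remainder is bookkeeping on blocks.

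Next I would feed this symmetry into the form $D = \left(\begin{smallmatrix} P & -P \\ -P & P\end{smallmatrix}\right)$ of Corollary~\ref{cor:periodic4}, where $P$ has size $2k\times 2k$. Splitting $P$ into $k\times k$ blocks as $P = \left(\begin{smallmatrix} U & V \\ W & X\end{smallmatrix}\right)$, the translation $t\mapsto t+k$, $x\mapsto x+k$ moves the $(i,j)$ block of the $4\times 4$ grid of $k\times k$ blocks to the $(i+1,j+1)$ block (grid indices read cyclically modulo $4$, consistent with applying the symmetry twice to recover $c_t^x = c_{t+2k}^{x+2k}$). The diagonal identification forces $X = U$, while the off-diagonal block $V$ of $P$ is identified with the block $-W$ coming from the top-right copy $-P$, forcing $W = -V$. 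Hence $P = \left(\begin{smallmatrix} U & V \\ -V & U\end{smallmatrix}\right)$, and substituting back into $\left(\begin{smallmatrix} P & -P \\ -P & P\end{smallmatrix}\right)$ reproduces exactly the claimed block-circulant matrix with first block row $(U,V,-U,-V)$.

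There is essentially no serious obstacle here. The only point requiring care is the bookkeeping in the last paragraph: one must track which of the four $k\times k$ blocks each translated coefficient lands in, so that the signs induced by $c_t^x = -c_t^{x+2k}$ are attached to the correct blocks and the two identifications $X = U$ and $W = -V$ are read off without error.
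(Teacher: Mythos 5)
Your proof is correct and follows essentially the same route as the paper: you derive the extra symmetry $c_t^x = c_{t+k}^{x+k}$ from the master relation with $d = k/2$ (where evenness of $k$ enters) combined with the $d=0$ case, and then impose it on the block form $\left(\begin{smallmatrix} P & -P\\ -P & P\end{smallmatrix}\right)$ from Corollary~\ref{cor:periodic4} to force $P = \left(\begin{smallmatrix} U & V\\ -V & U\end{smallmatrix}\right)$, exactly as in the paper. The block bookkeeping identifying $X=U$ and $W=-V$ is carried out correctly.
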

\begin{proof}
     Let $D$ be a derivation of the quandle algebra $A$ when $n = 4k$ for some even $k$. Consider the symmetry 
    \begin{equation}
    c_{t+2d}^x = c_t^{2t+2d-x} \label{eqn:Mastersymmetry}
    \end{equation}
    proved in Theorem~\ref{pro:dihedralderivations}, and take $d = k/2$. It follows that $c^x_{t+k} = c_t^{2t+k-x}$ which, upon changing variable $x$ to $x+k$, gives $ c_{t+k}^{x+k} = c_t^{2t-x}$. By \eqref{eqn:Mastersymmetry} with $d=0$, it follows that $c_t^x = c_{t+k}^{x+k}$ for all $x$ and $k$. From Theorem~\ref{pro:dihedralderivations} we have that $D$ is determined by a square matrix $B$ of order $2k$ and it has the shape $ D = \bigl(\begin{smallmatrix}
P && -P\\
-P && P
\end{smallmatrix}\bigr)$. The latter symmetry along with $c_t^x = c_{t+k}^{x+k}$ shows that $P = \bigl(\begin{smallmatrix}
U && V\\
-V && U
\end{smallmatrix}\bigr)$ for some square matrices $U$ and $V$, of order $k$, which shows that the matrices of the derivation algebra of $A$ 
have the asserted form. 
\end{proof}
\begin{remark}
When $k$ is odd, we have that $c_{t+k}^{x+k} = c_{t+1+2\frac{k-1}{2}}^{x+k} = c_{t+1}^{2t-x+1}$, where we have used \eqref{eqn:Mastersymmetry}. This in turn implies, by another application of \eqref{eqn:Mastersymmetry}, that $c_{t+k}^{x+k} = c_{t+1}^{x+1}$.
Then, we have the following  symmetries $c_t^x = c_{t+k-1}^{x+k-1}$.
\end{remark}

\begin{theorem}\label{dimensions}
    Let notation be as above. Then, the dimension $N$ of the derivation algebra of $A$ is given by $N=2k$ if $k=2l$ and $N=2k-1$ if $k=2l+1$.
\end{theorem}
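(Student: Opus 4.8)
The plan is to turn the dimension count into an explicit bookkeeping problem on two ``fundamental'' rows of the derivation matrix. By Corollary~\ref{cor:periodic4}, together with the corollary and remark following it, every derivation matrix in the case $n=4k$ has the block shape $\bigl(\begin{smallmatrix} P & -P \\ -P & P\end{smallmatrix}\bigr)$ with $P$ of size $2k\times 2k$; hence $\dim\mathfrak D(\mathcal A)$ equals the number of free entries of $P$ subject to the surviving relation $c_{t+2d}^x = c_t^{2t+2d-x}$ of Theorem~\ref{pro:dihedralderivations}. First I would rewrite that master relation, for indices of equal parity $s\equiv t\pmod 2$, in the form $c_s^x = c_t^{\,s+t-x}$. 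This shows that every even row is determined by row $0$ and every odd row by row $1$, so the count decouples as $\dim\mathfrak D(\mathcal A)=(\text{free entries of row }0)+(\text{free entries of row }1)$, and it suffices to analyse one even and one odd fundamental row.

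On each fundamental row I would combine the two symmetries it inherits: the palindrome $c_t^x=c_t^{2t-x}$ (the case $s=t$ of the master relation) and the anti-periodicity $c_t^x=-c_t^{x+2k}$. Reducing the column index into $\{0,\dots,2k-1\}$ by the anti-periodicity and then imposing the palindrome pairs the columns $x\leftrightarrow 2t-x$ with a sign; I would enumerate these pairs, single out the \emph{self-paired} columns, and note that each self-paired entry is forced to vanish, which is precisely the vanishing $c_t^{t+k}=0$ already recorded in Theorem~\ref{pro:dihedralderivations}. Each genuine two-element pair then contributes one free parameter, and together with the unconstrained diagonal entry this gives the free dimension of the row. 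The only point at which the \emph{parity of $k$} can intervene is in how many of the candidate self-paired columns actually lie in the reduced range and whether they coincide; this fixed-point tally is the delicate step, and carrying it out carefully for the even and the odd row is what distinguishes the value $N=2k$ from $N=2k-1$.

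I expect this parity-sensitive fixed-point count to be the main obstacle, the rest being formal. Two supporting points make the count rigorous. First, the listed symmetries must be shown \emph{sufficient}, so that no further relation shrinks the dimension: by Proposition~\ref{lem:derivation} each fiber $x^{-1}(z)$ for a dihedral quandle is either empty or consists of two elements differing by $2k$, and the anti-periodicity $c_y^q=-c_y^{q+2k}$ cancels that two-term contribution, so the Leibniz constraint collapses exactly onto the master relation $c_{t+2d}^x=c_t^{2t+2d-x}$; hence any matrix obeying the symmetries is a genuine derivation. Second, I would verify that the periodicity $c_t^x=c_{t+2k}^{x+2k}$ (and, for even $k$, the finer $c_t^x=c_{t+k}^{x+k}$ coming from the preceding corollary) imposes nothing new on the two fundamental rows, being automatic once the palindrome holds. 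Assembling the per-row counts for the even and the odd fundamental row and invoking the parity of $k$ at the fixed-point step then yields the stated $N=2k$ for $k=2l$ and $N=2k-1$ for $k=2l+1$.
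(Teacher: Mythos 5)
Your plan follows the same route as the paper's proof: reduce, via the master relation $c_{t+2d}^x=c_t^{2t+2d-x}$, to the two fundamental rows $t=0$ and $t=1$, and then count the free entries of each row under the palindrome $c_t^x=c_t^{2t-x}$ together with the anti-periodicity $c_t^x=-c_t^{x+2k}$. Your sufficiency paragraph (for the dihedral quandle every fiber $x^{-1}(z)$ in Proposition~\ref{lem:derivation} is empty or a pair differing by $2k$, so anti-periodicity annihilates the left-multiplication term and the Leibniz rule collapses onto the master relation) is correct and is a useful supplement, since the paper relies on Theorem~\ref{pro:dihedralderivations} for that point. However, the proposal stops exactly where the theorem lives: you explicitly defer the ``parity-sensitive fixed-point tally'' that is supposed to separate $N=2k$ from $N=2k-1$, so the statement is not actually proved; the per-row count is the entire content of the theorem, not a formality to be filled in later.

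Worse, the deferred step is not routine, because carrying out the orbit count you describe does not produce the claimed parity dependence. On a fundamental row $t$, the group generated by $x\mapsto 2t-x$ (sign $+1$) and $x\mapsto x+2k$ (sign $-1$) has exactly one forced-zero orbit $\{t+k,\,t+3k\}$ (this is $c_t^{t+k}=0$), one free two-element orbit $\{t,\,t+2k\}$, and $(4k-4)/4=k-1$ four-element orbits each carrying one parameter; hence each of the rows $t=0$ and $t=1$ contributes $k$ free parameters \emph{irrespective of the parity of $k$}, giving $2k$ in both cases. By your own sufficiency argument there is no further relation available to remove a parameter when $k$ is odd, so your route, executed as described, cannot reach $N=2k-1$; note also that the sixth quandle of order $4$ in Table~\ref{Table3} is the dihedral quandle $\Z_4$ (the case $k=1$), and the derivation space displayed there is two-dimensional, i.e. $2k$ rather than $2k-1$. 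The paper's own proof asserts, with little detail, that the second row contributes only $k-1$ parameters when $k$ is odd; your proposal gives no mechanism for that loss, and to make it a proof of the statement as written you would have to exhibit the extra constraint for odd $k$ and reconcile it with your sufficiency argument and with the order-$4$ computation.
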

\begin{proof}
   From \eqref{eqn:Mastersymmetry} above with $d=1$ and $x$ replaced by $x+2$, we get $c_{t+2}^{x+2} = c_t^{2t-x} = c_t^x$ for all $t,x$. This shows that a matrix $D$ corresponding to a derivation of $A$ is determined by $2\times 2$ block matrices from its first two rows. From Corollary~\ref{cor:periodic4}, moreover, we have that we can restrict ourselves to the first half of the two rows, when counting the free parameters that give the dimension of the derivation algebra. Let $k$ be even. Then, \eqref{eqn:Mastersymmetry} where $t=0$ and $d = 0$ gives that $c_0^x = c_0^{-x} = c_0^{4k-x}$ for $x = 1, \ldots , k-1$. But, since we have the symmetry $c_t^x = -c_t^{x+2k}$, it follows that $c_0^x$ with $x = k+2, \ldots, 2k-1$ are obtained from the coefficients $c_0^x$ with $x= 1, \ldots , k$, taken with negative sign and in opposite order. Moreover, the same reasoning shows that $c_0^k = 0$. It follows that from the first row we have $k$ free parameters contributing to the dimension of the derivation algebra. For the second row, i.e. with $k = 1$, we have $c_1^x = c_1^{2-x}$, so we can apply the same procedure as above, but with a shift of $2$. Since $c_1^0 = c_1^2$, again we have a contribution of $k$ free parameters to the dimension of the derivation algebra. When $k$ is odd, one proceeds similarly, where the main difference is that the second row now contributes by a term of $k-1$. This completes the proof. 
\end{proof}

In particular, we have the following explicit matrices for small values of $n$. In the sequel the $a_i$ denote free parameters in $\mathbbm{k}$.

\begin{table}[H]
\caption{Matrices of the Derivations for small even  values of $k$}
\label{Table1}
\begin{center}
\begin{tabular}{ |c|c|c|} 
 \hline
$n=4k $  & U & V \\ 
\hline
$k=2$ & $\left( \begin{array}{rr}
a_1& a_2 \\
a_3 &a_4 
\end{array} \right) $ & $\left( \begin{array}{rr}
0 & -a_2 \\
a_3 &0 
\end{array} \right) $  \\ 
  \hline
  $k=4$ & $\left( \begin{array}{rrrr}
a_1 & a_2 & a_3& a_4\\
a_5 & a_6 & a_5 & a_7 \\
a_3 & a_2 & a_1 & a_2 \\
a_8 & a_7 & a_5  & a_6
\end{array} \right) $ & $\left( \begin{array}{rrrr}
0 & -a_4 & -a_3& -a_2\\
a_8 & 0 & -a_8 & -a_7 \\
a_3 & a_4 & 0 & -a_4 \\
a_5 & a_7 & a_8  & 0
\end{array} \right) $  \\ 
  \hline
  $k=6$ & $\left( \begin{array}{rrrrrr}
a_1 & a_2 & a_3& a_4& a_5& a_6\\
a_7 & a_8 & a_7& a_9& a_{10}& a_{11}\\
a_3 & a_2 & a_1& a_2& a_3& a_4\\
a_{10} & a_9 & a_7& a_8& a_7& a_9\\
a_5 & a_4 & a_3& a_2& a_1& a_2\\
a_{12} & a_{11} & a_{10}& a_9& a_7& a_8
\end{array} \right) $ & $\left( \begin{array}{rrrrrr}
0 & -a_6 &- a_5& -a_4& -a_3& -a_2\\
a_{12} & 0 & -a_{12}& -a_{11}& -a_{10}& -a_{9}\\
a_5 & a_6 & 0& -a_6& -a_5& -a_4\\
a_{10} & a_{11} & a_{12}&0& -a_{12}& -a_{11}\\
a_3 & a_4 & a_5& a_6& 0& -a_6\\
a_{7} & a_{9} & a_{10}& a_{11}& a_{12}& 0
\end{array} \right) $  \\ 
  \hline
\end{tabular}
\end{center}
\end{table}

\section{Computations and Examples }\label{Computations}
In this section, we provide examples of derivations for given quandle algebras,  computed either by hands or   using a computer algebra system : the software Mathematica. 

\subsection{Derivations of  Quandle Algebras of Quandles of order 3 and 4}
Since the numbers of isomorphism classes of quandles of order $3$ and $4$, are respectively $3$ and $7$, we include the results of the derivations over their quandle algebras here.

In Table~\ref{Table2}, the leftmost column gives the Cayley table of each quandle of order $3$.  In each Cayley table the $(i,j)$-entry represents $i \rt j$, for example, in the second row of Table~\ref{Table2} we have $1 \rt 3=2$.  The second column of Table~\ref{Table2} provides the derivation matrices on characteristics zero and the third one in characteristics 3.

\begin{table}[H]
\caption{Matrices of the Derivations for quandles of cardinality  $3$}
\label{Table2}
\begin{center}
\begin{tabular}{ |c|c|c|} 
 \hline
$Order=3 $  & Char 0   & Char  3 \\ 
\hline
\small{
$\left[ \begin{array}{c}
1 \;1 \;1  \\
2 \;2 \;2 \\
3 \;3\; 3 
\end{array} \right] $} & $\left( \begin{array}{ccc}
a_1 & a_2 & a_3 \\
a_4 & a_5 & a_6 \\
-a_1-a_4 & -a_2-a_5 & -a_3-a_6
\end{array} \right) $ 
 & $\left( \begin{array}{ccc}
-a_1-a_2 & -a_1-a_2 & -a_1-a_2 \\
a_2 & a_2 & a_2 \\
a_1 & a_1 & a_1
\end{array} \right) $\\ 
\hline
\small{
$\left[ \begin{array}{c}
1 \;1 \;2  \\
2 \;2 \;1 \\
3 \;3\; 3 
\end{array} \right] $} & $\left( \begin{array}{ccc}
a_1 & -a_1 & 0 \\
-a_1 & a_1 & 0 \\
0 & 0 & 0
\end{array} \right) $ 
 & $\left( \begin{array}{ccc}
a_1 & -a_1 & 0 \\
-a_1 & a_1 & 0 \\
0 & 0 & 0
\end{array} \right) $\\ 
  \hline
\small{
$\left[ \begin{array}{c}
1 \;3 \;2  \\
3 \;2 \;1 \\
2 \;1\; 3 
\end{array} \right] $} & $ \begin{array}{ccc}
\mbox{Zero\ Matrix}
\end{array}  $ 
 & $\left( \begin{array}{ccc}
0 & a_1+a_2 & a_2 \\
a_1 & 0 & -a_2 \\
-a_1 & -a_1-a_2 & 0
\end{array} \right) $\\ 
  \hline
\end{tabular}
\end{center}
\end{table}

The derivations for quandle algebras corresponding to the seven classes of quandles of cardinality 4 are given in Table~\ref{Table3} and Table~\ref{Table4}, where the first one refers to characteristic $0$, and the second one to characteristic $2$. 

\begin{table}[H] 
\caption{
Derivations for quandles of cardinality  $4$ in characteristic 0}
\label{Table3}
\begin{center}
\begin{tabular}{ |c|c|} 
 \hline
$Order=4 $  & Char 0    \\ 
\hline
\small{
$\left[ \begin{array}{c}
1\; 1\; 1\; 1  \\
2\; 2\; 2\; 2 \\
3\; 3\; 3\; 3 \\ 
4\; 4\; 4\; 4
\end{array} \right] $} & $\left( \begin{array}{cccc}
a_1 & a_2 & a_3 & a_4 \\
a_5 & a_6 & a_7& a_8 \\
a_9 & a_{10} & a_{11}&a_{12}\\
-a_1-a_5-a_9 & -a_2-a_6-a_{10} & -a_3-a_7-a_{11}&-a_4-a_8-a_{12}
\end{array} \right) $ 
 \\ 
\hline
\small{
$\left[ \begin{array}{c}
1\; 1\; 1\; 1  \\
2\; 2\; 2\; 3 \\
3\; 3\; 3\; 2 \\ 
4\; 4\; 4\; 4
\end{array} \right] $} 
 & Zero Matrix\\ 
\hline
\small{
$\left[ \begin{array}{c}
1\; 1\; 1\; 2  \\
2\; 2\; 2\; 3 \\
3\; 3\; 3\; 1 \\ 
4\; 4\; 4\; 4
\end{array} \right] $} 
 & $\left( \begin{array}{cccc}
a_1 & a_2 & -a_1-a_2 & 0 \\
-a_1-a_2 & a_1 & a_2 &0 \\
a_2 & -a_1-a_2 & a_1 &0\\
0 & 0 & 0 & 0
\end{array} \right) $\\ 
\hline
\small{
$\left[ \begin{array}{c}
1\; 1\; 1\; 1  \\
2\; 2\; 4\; 3 \\
3\; 4\; 3\; 2 \\ 
4\; 3\; 2\; 4
\end{array} \right] $ }
 & Zero Matrix \\ 
\hline
\small{
$\left[ \begin{array}{c}
1\; 1\; 2\; 2  \\
2\; 2\; 1\; 1 \\
3\; 3\; 3\; 3 \\ 
4\; 4\; 4\; 4
\end{array} \right] $ }
 & $\left( \begin{array}{cccc}
a_1 & -a_1 & 0& 0 \\
-a_1 & a_1 & 0& 0 \\
a_2 & a_2 & a_3&a_4\\
-a_2 & -a_2 & -a_3 &-a_4
\end{array} \right) $\\ 
\hline
\small{
$\left[ \begin{array}{c}
1\; 1\; 2\; 2  \\
2\; 2\; 1\; 1 \\
4\; 4\; 3\; 3 \\ 
3\; 3\; 4\; 4
\end{array} \right] $ } 
 & $\left( \begin{array}{cccc}
a_1& -a_1 & 0 & 0 \\
-a_1 & a_1 & 0 & 0 \\
0 & 0 & a_2& -a_2\\
0 & 0 & -a_2 &a_2
\end{array} \right) $\\ 
\hline
\small{
$\left[ \begin{array}{c}
1\; 4\; 2\; 3  \\
3\; 2\; 4\; 1 \\
4\; 1\; 3\; 2 \\ 
2\; 3\; 1\; 4
\end{array} \right] $ }
& Zero Matrix\\
\hline
\end{tabular}
\end{center}
\end{table}
\begin{table}[H]
\caption{
Derivations for quandles of cardinality  $4$ in characteristic 2}
\label{Table4}
\begin{center}
\begin{tabular}{ |c|c|} 
 \hline
$Order=4 $  &  Char  2 \\ 
\hline
{\small 
$\left[ \begin{array}{c}
1\; 1\; 1\; 1  \\
2\; 2\; 2\; 2 \\
3\; 3\; 3\; 3 \\ 
4\; 4\; 4\; 4
\end{array} \right] $} 
 & $\left( \begin{array}{cccc}
a_1 & a_2 & a_3 & a_4 \\
a_5 & a_6 & a_7 & a_8 \\
a_9 & a_{10} & a_{11} & a_{12}\\
a_1 + a_5 + a_9 & a_2 + a_6 + a_{10} & a_3 + a_7 + a_{11}  &a_4 + a_8 + a_{12}
\end{array} \right) $\\ 
\hline
{\small $\left[ \begin{array}{c}
1\; 1\; 1\; 1  \\
2\; 2\; 2\; 3 \\
3\; 3\; 3\; 2 \\ 
4\; 4\; 4\; 4
\end{array} \right] $ }
 & Zero Matrix \\
\hline
{\small $\left[ \begin{array}{c}
1\; 1\; 1\; 2  \\
2\; 2\; 2\; 3 \\
3\; 3\; 3\; 1 \\ 
4\; 4\; 4\; 4
\end{array} \right] $ }
 & $\left( \begin{array}{cccc}
a_1 & a_1+a_2 & a_2 & 0 \\
a_2 & a_1 & a_1+a_2 & 0 \\
a_1+a_2 & a_2 & a_1& 0\\
0 & 0 & 0 &0
\end{array} \right) $\\ 
\hline
{\small $\left[ \begin{array}{c}
1\; 1\; 1\; 1  \\
2\; 2\; 4\; 3 \\
3\; 4\; 3\; 2 \\ 
4\; 3\; 2\; 4
\end{array} \right] $ }
 & Zero Matrix\\ 
\hline
{\small $\left[ \begin{array}{c}
1\; 1\; 2\; 2  \\
2\; 2\; 1\; 1 \\
3\; 3\; 3\; 3 \\ 
4\; 4\; 4\; 4
\end{array} \right] $ }
 & $\left( \begin{array}{cccc}
a_1 & a_1 & a_2 & a_3 \\
a_1 & a_1 & a_2 & a_3 \\
a_4 & a_4 & a_5&a_6\\
a_4 & a_4 & a_5 &a_6
\end{array} \right) $\\ 
\hline
{\small $\left[ \begin{array}{c}
1\; 1\; 2\; 2  \\
2\; 2\; 1\; 1 \\
4\; 4\; 3\; 3 \\ 
3\; 3\; 4\; 4
\end{array} \right] $ }
 & $\left( \begin{array}{cccc}
a_1 & a_1 & a_2 & a_2 \\
a_1 & a_1 & a_2 & a_2 \\
a_3 & a_3 & a_4&a_4\\
a_3 & a_3 & a_4 &a_4
\end{array} \right) $\\ 
\hline
{\small $\left[ \begin{array}{c}
1\; 4\; 2\; 3  \\
3\; 2\; 4\; 1 \\
4\; 1\; 3\; 2 \\ 
2\; 3\; 1\; 4
\end{array} \right] $ }
 & $\left( \begin{array}{cccc}
0 & a_1 & a_2 & a_3 \\
a_2+a_3 & 0 & a_2+a_3+a_4 & a_1+a_2+a_3+a_4 \\
a_1+a_3 & a_1+a_4 & 0 &a_1+a_2+a_4\\
a_1+a_2 & a_4 & a_3+a_4 & 0
\end{array} \right) $\\ 
  \hline
\end{tabular}
\end{center}
\end{table}

\subsection{Derivations of dihedral quandles in positive characteristic}
We provide in Table~\ref{Table5}  the matrix forms of the derivations over $\mathbbm k[X]$, where $X$ is the dihedral quandle of cardinality $n$ with $3 \leq n \leq 6$ for the cases of positive characteristic.

Notice that for $n=5$ and the characteristic of $\mathbbm k$ is equal to $2$ or $3$, we have only trivial derivations, corresponding to zero matrices.

\begin{table}[H]
\caption{Matrices of $D$ for dihedral quandles with small values of $n$ with positive characteristic}
\label{Table5}
\begin{center}
\begin{tabular}{ |c|c|cl|} 
 \hline
$n$ & \textit{char}($\mathbbm k$) & \textit{ Matrix of the derivation D } &\\ 
  \hline
$3$ & $3$ & $\left( \begin{array}{ccc}
0 & a_1+a_2 & a_2\\
a_1 &0 &-a_2 \\ 
-a_1 & -(a_1+a_2) &0
\end{array} \right) $  & \\ 
 \hline
$4$ &  $2$ & $\left( \begin{array}{cccc}
a_4 & a_3 & a_4& a_3\\
a_2 &a_1 & a_2& a_1 \\ 
a_4 & a_3 & a_4& a_3\\
a_2 &a_1 & a_2& a_1 \\
\end{array} \right)$  & \\ 
 \hline
 $5$ &  $5$ & $\left( \begin{array}{ccccc}
0 & 4a_1+3a_2 & 4 a_1 & 4a_1+4a_2& 4a_1+a_2\\
4 a_2& 0 &3a_1 &a_1+a_2& 2a_1+3a_2 \\ 
2 a_2& a_1+2a_2 & 0 & 2a_1+2a_2& 3a_1+2a_2\\
3 a_2& 3a_1+a_2 & 2a_1 & 0& a_1+4a_2\\
a_2& 2a_1+4a_2 & a_1 & 3 a_1+3a_2&0 
\end{array} \right)$ & \\
\hline
$6$ &  $2$ & $\left( \begin{array}{cccccc}
a_1 & a_1 & a_2& a_2 & a_2 & a_1\\
a_2 & a_2 & a_2 & a_1 & a_1& a_1 \\ 
a_2 & a_1 & a_1 & a_1 & a_2& a_2 \\
a_1 & a_1 & a_2 & a_2 & a_2& a_1 \\
a_2 & a_2 & a_2 & a_1 & a_1& a_1 \\
a_2 & a_1 & a_1 & a_1 & a_2& a_2 
\end{array} \right)$  & \\
 \hline 
$6$ &  $3$ & \tiny{$\left( \begin{array}{cccccc}
2a_1 & a_1 & 2a_2+a_3+a_4& a_2 & 2a_1+a_2+a_3+a_4 & a_1\\
2a_2+a_3+2a_4 & 2a_2 & 2a_2+a_3+2a_4 & a_1+2a_2 & a_1& 2 a_1+2a_2+a_3 \\ 
a_1+2a_2+2a_3+a_4  & a_1  & 2a_1 & a_1 & a_1+a_2+a_4 & a_2 \\
a_1 & a_1+2a_3  & 2a_2+a_3+2a_4  & 2a_2 & 2 a_2+a_3+2a_4& 2a_1+a_2+2a_3  \\
2a_1+2a_3+a_4  & a_2 & a_4 & a_1 & 2a_1& a_1 \\
2a_2+a_3+2a_4  & a_3 & a_1 & a_2 & 2a_2+a_3+2a_4 & 2a_2 
\end{array} \right)$}  &  \\*[8mm] 
 \hline
\end{tabular}
\end{center}
\end{table}

\subsection{Derivations of conjugation quandle over symmetric group \texorpdfstring{$S_3$}{}}

	Consider the symmetric group $X=S_3 =<x,y; \; x^2=1=y^3,\; xyx=y^{-1}>$ as a quandle with conjugation $u \rt v=v^{-1}uv$.  This quandle decomposes as a disjoint union of orbits $ \{1\}\sqcup \{y, y^2\} \sqcup \{x, yx,y^2x\} $.  Let $\mathbb{Z}[X]$ be its 6-dimensional quandle algebra over $\mathbb{Z}$.  We will denote the basis by $e_1, \ldots, e_6$ corresponding respectively to the quandle elements in the following order $1, y, y^2, x, yx, y^2x$.  

	With the introduced previously notation, using basis vectors, the entry $(i,j)$ in the multiplication table of the algebra is $e_i \cdot e_j:=e_{i \rt j}$, for example $e_1 \cdot e_j=e_1\;$ and $e_j \cdot e_1=e_j, \forall \; 1 \leq j \leq 6$:
	$$\begin{array}{|c|c|c|c|c|c|c|} 
	\hline
	 \ &  e_1 & e_2 & e_3 & e_4 & e_5 & e_6 \\ \hline
	\ e_1 &  e_1 & e_1 & e_1 & e_1  & e_1 & e_1\\ \hline
	\ e_2 &  e_2 & e_2 & e_2 & e_3  & e_3 & e_3\\ \hline
	\ e_3 &  e_3 & e_3 & e_3 & e_2  & e_2 & e_2\\ \hline
	\ e_4 &  e_4 & e_5 & e_6 & e_4  & e_6 & e_5\\ \hline
	\ e_5 &  e_5 & e_6 & e_4 & e_6  & e_5 & e_4\\ \hline
	\ e_6 &  e_6 & e_4 & e_5 & e_5  & e_4 & e_6\\ \hline
	\end{array}
	$$
	The derivations are described as follows:
	\begin{enumerate}[label=\rm \arabic*)]
	    \item If the characteristic is 0 or 2, all the derivations are trivial.
	    \item If the characteristic is 3, then the derivations are described with respect to  the given basis by the matrix
	    
	    \begin{small}\noindent 
	    $\left( \begin{array}{cccccc}
-a_1 & a_1 & a_1&   a_1+a_2+a_3+a_4 &             a_1+a_2+a_3+a_4&      a_1+a_2+a_3+a_4\\
-a_1 & -a_4&-a_1+a_4 &  a_1+a_2+a_3+a_4 &         a_1+a_2+a_3+a_4&        a_1+a_2+a_3+a_4 \\ 
-a_1& -a_1+a_4& -a_4 &  a_1+a_2+a_3+a_4 &       a_1+a_2+a_3+a_4&            a_1+a_2+a_3+a_4  \\
a_1 & a_4  & a_4  &     -a_1-a_2-a_3-a_4 &          a_1+a_3+a_4&        -a_1+a_2-a_3  \\
a_1  & a_4  & a_4 & a_1+a_2+a_4 &              -a_1-a_2-a_3-a_4&          -a_1+a_2-a_3 \\
a_1 & a_4  & a_4 &       a_3 &                       a_2 &             -a_1-a_2-a_3-a_4 
\end{array} \right)$
\end{small}	   
	\end{enumerate}
	
\section{Lie Transformation Algebra and Inner Derivations}\label{LieTransformation}

Recall that if $A$ is a non-associative algebra, a subspace $I\leq A$ is an ideal of $A$ if it is two-sided ideal, that is for every $a\in A$ and $x\in I$, we have $xa\in I$ (left ideal) and $ax\in I$ (right ideal). A nontrivial non-associative algebra is said to be {\it simple}, if its only ideals are ${0}$ and $A$. Recall further, that a {\it semisimple} algebra is a direct sum of simple algebras: $A = A_1\oplus \ldots \oplus A_n$. In this section, we do not make any assumptions on the characteristic of the ground field, as the results are valid on any characteristic.

The {\it center} $Z$ of a non-associative algebra $A$, is the subspace of all the elements $x\in A$ that commute and associate with everything. In other words, 
for all $a,b\in A$, 
$$
xa = ax,\ \ a(xb) = (ax)b.$$
Observe that if $A$ is associative, the second set of equalities automatically holds, and therefore the center coincides with the usual definition.

When $A$ is an associative algebra, it is easy to see that the map $[\bullet,x] : A \longrightarrow A$ defines a derivation of $A$.
This result relies on the Jacobi identity, as a direct inspection proves. In non-associative algebras, the Jacobi identity is replaced by a more general relation. In fact, let $I,J,K$ be ideals of $A$. Then,
$$
[[I,J],K] \subset [[J,K],I] + [[K,I],J],
$$
where $[I,J]$ means the subspace of elements $[x,y]$ with $x\in I$ and $y\in J$. We can modify the notion of inner derivation in the following way \cite{Scha}. Let $T$ be a set of linear maps $A \longrightarrow A$. Consider iteratively
$$
T_i = [T_1,T_{i-1}],
$$
where we set $T_1 = T$. Then $\mathcal{T}(A) := T_1 + \ldots + T_K + \ldots $ is the smallest Lie algebra containing $T$ (see \cite{Scha} for a proof). In the specific case in which $T$ consists of right and left multiplication maps by elements of $A$, $\mathcal{T}(A)$ is called {\it Lie transformation algebra} of $A$.

\begin{definition}\label{def:nonassinner}
	{\rm 
		Let $A$ be a non-associative algebra and let $\mathcal{T}(A)$ be its Lie transformation algebra. Then a derivation of $A$ which is also an element of $\mathcal{T}(A)$ is called an {\it inner derivation} of $A$.
	}
	\end{definition}
\begin{remark}
	{\rm 
It is known that when $A$ is associative, then Definition~\ref{def:nonassinner} and the usual definition of inner derivation coincide. 
}
\end{remark}

\begin{remark}
    {\rm 
    The notions of simple and semi-simple algebra are defined in the context of non-associative algebras in a similar manner to the associative case. Namely, a simple (non-associative) algebra is an algebra which does not contain nontrivial subalgebras, while an algebra is semi-simple if it is the direct sum of simple algebras. It is a result of Schafer, see \cite{Scha}, that for semi-simple algebras, derivations are all inner if and only if all the derivations of the simple algebras are inner. This paradigm can be applied to the case of quandle algebras, with inner derivations as in Definition~\ref{def:nonassinner}.
    }
\end{remark}

Let us denote by $\mathcal L(X)$ the vector space generated by left multiplications of elements in $\mathbbm k[X]$. Let $\psi = \sum_i a_i \cdot e_{x_i}$ be in $\mathbbm k[X]$, denote left multiplication map by an element of $\psi$  by the symbol $L_{\psi}$. Using linearity of the product in $\mathbbm k[X]$, $L_\psi$ can be written as a linear combination of  $\sum_i a_i L_{e_{x_i}}$. For simplicity we write $L_{x_i}$ for $L_{e_{x_i}}$. Similarly, we denote by  $\mathcal R(X)$ the linear space generated by right multiplications of elements in $\mathbbm k[X]$, where we adopt a similar notation to indicate right multiplication operator by a basis vector $e_{x_i}$. By definition, therefore, the Lie transformation algebra $\mathcal T(\mathbbm k[X])$ is given by the smallest Lie algebra containing $\mathcal L(X) + \mathcal R(X)$, and an inner derivation of a quandle algebra is an element of $\mathcal T(\mathbbm k[X])$ which satisfies the Leibniz rule.

Self-distributivity of basis elements in a quandle/rack algebra induces a commutation relation of maps of type $L_y$ and $R_x$ that forces the Lie transformation algebra of $\mathbbm k[X]$ to be determined by products of left and right multiplications in a definite order. Specifically, we have the following result.

\begin{proposition}\label{lem:Lietransf}
	Let $X$ be a quandle and let $A := \mathbf k [X]$ be its quandle algebra. Then the Lie transformation algebra $\mathcal{T}(A)$, is contained in:
	$$
	\mathcal {L R}(A) := \sum_{n,m\in \mathbbm{N}_0} L^m(A) \cdot R^n(A),
	$$
	where $L^m(A)$ denotes the vector space generated by products of left multiplications of $A$, $m$ times, and $R^n(A)$ denotes the space generated by products of right multiplications, $n$ times.  
\end{proposition}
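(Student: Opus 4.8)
The plan is to realize $\mathcal{L R}(A)$ as an \emph{associative} subalgebra of $\operatorname{End}(A)$ under composition which already contains the generating set $\mathcal L(X) + \mathcal R(X)$. Any associative subalgebra is automatically closed under the commutator bracket $[\phi,\psi] = \phi\circ\psi - \psi\circ\phi$, hence is a Lie subalgebra; since $\mathcal T(A)$ is by construction the \emph{smallest} Lie algebra containing $\mathcal L(X)+\mathcal R(X)$, the inclusion $\mathcal T(A)\subseteq \mathcal L R(A)$ will follow. Concretely, using the iterative definition $T_1 = \mathcal L(X)+\mathcal R(X)$ and $T_i = [T_1,T_{i-1}]$, an induction gives $T_i\subseteq \mathcal L R(A)$ for all $i$ once we know that $\mathcal L R(A)$ contains $T_1$ and is closed under brackets; summing over $i$ yields the claim. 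The facts $\mathcal L(X) = L^1(A) R^0(A)$ and $\mathcal R(X) = L^0(A) R^1(A)$ are immediate from the definition (with the empty product read as the identity operator), so the whole argument reduces to closure of $\mathcal L R(A)$ under composition.

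The heart of the proof is a single exact commutation relation between left and right multiplications, coming directly from the self-distributivity axiom (III). First I would compute, on a basis vector $e_w$,
$$
(R_z\circ L_y)(e_w) = e_{(y\rt w)\rt z} = e_{(y\rt z)\rt(w\rt z)} = (L_{y\rt z}\circ R_z)(e_w),
$$
where the middle equality is exactly axiom (III). Hence, as operators,
$$
R_z\circ L_y = L_{y\rt z}\circ R_z .
$$
By bilinearity of the product this extends from basis elements to arbitrary $L_\psi, R_\phi$ with $\psi,\phi\in A$, producing a \emph{sum} of terms of the shape $L\circ R$, i.e.\ an element of $L^1(A) R^1(A)$. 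The crucial feature is that this relation is \emph{degree preserving}: one right multiplication is swapped past one left multiplication, trading an $R$-factor and an $L$-factor for an $L$-factor (with modified index) and an $R$-factor, without creating any new factors.

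Armed with this, closure under composition is a normal-ordering argument. A generic composite of generators has the form $L^{m_1}R^{n_1}L^{m_2}R^{n_2}\cdots$; to bring it into the form $\sum L^m R^n$ it suffices to push every $R$-block to the right past the $L$-block immediately following it. I would do this by repeatedly applying the commutation relation to move one $R$-factor at a time to the right of a block of $L$'s, transforming the intervening left-multiplication indices via $y\mapsto y\rt z$ while leaving the total number of $L$- and $R$-factors unchanged. An induction on the number of \emph{inversions} (pairs in which an $R$-factor precedes an $L$-factor) shows this process terminates and returns an element of $\sum_{m,n} L^m(A) R^n(A) = \mathcal L R(A)$, with $m = m_1+m_2+\cdots$ and $n = n_1+n_2+\cdots$. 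This establishes that $\mathcal L R(A)$ is closed under composition, completing the proof.

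I expect the main obstacle to be almost entirely the verification of the commutation relation $R_z\circ L_y = L_{y\rt z}\circ R_z$, which is where axiom (III) enters; once it is in hand, its degree-preservation makes the normal-ordering induction routine bookkeeping. A secondary point worth stating explicitly is the convention $L^0(A) = R^0(A) = \mathbbm k\,\operatorname{id}$, which is what makes $\mathcal L(X)$ and $\mathcal R(X)$ visibly sit inside $\mathcal L R(A)$ and keeps the sum closed under the empty-block cases of composition.
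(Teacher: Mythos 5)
Your proof is correct, and it hinges on exactly the same key fact as the paper's: the operator identity $R_z\circ L_y = L_{y\rt z}\circ R_z$, which is a direct restatement of the self-distributivity axiom (III) on basis vectors and extends bilinearly to $R_\phi L_\psi \in L^1(A)\cdot R^1(A)$, so that $\mathcal R(A)\mathcal L(A)\subseteq \mathcal L(A)\mathcal R(A)$. Where you diverge is in how the induction is packaged. The paper works directly with the iterative definition $T_1=\mathcal L(X)+\mathcal R(X)$, $T_{i}=[T_1,T_{i-1}]$, and shows by induction that $[L(A),L^m(A)\cdot R^n(A)]\subseteq L^{m+1}(A)\cdot R^n(A)$ and $[R(A),L^m(A)\cdot R^n(A)]\subseteq L^m(A)\cdot R^{n+1}(A)$, so each $T_k$ lands in $\mathcal{LR}(A)$; it never needs to know that $\mathcal{LR}(A)$ is closed under arbitrary compositions. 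You instead prove the stronger statement that $\mathcal{LR}(A)$ is an associative subalgebra of $\operatorname{End}(A)$ (via the normal-ordering/inversion-count argument, which works because the commutation relation is degree preserving), hence a Lie subalgebra, and then invoke minimality of $\mathcal T(A)$. This is precisely the alternative route the paper itself sketches in the remark immediately following the proposition (``$\mathcal{LR}(A)$ is a Lie algebra \dots\ provides an alternative proof to the inclusion''), so you have in effect supplied the details of that remark: your version buys the extra structural fact about $\mathcal{LR}(A)$ and a cleaner appeal to minimality, at the cost of a slightly heavier bookkeeping induction, while the paper's version stays strictly within the commutator towers $T_k$ and only needs the one-step absorption of $L(A)$ and $R(A)$. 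Your explicit convention $L^0(A)=R^0(A)=\mathbbm{k}\,\operatorname{id}$ is also a sensible clarification that the paper leaves implicit.
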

\begin{proof}
	As observed before, that if $x\in A$, it is by definition of the form, $\sum_i a_i x_i$ and therefore the right multiplication map $R_x : A \longrightarrow A$ can be written as a linear combination: $R_x = \sum_i a_iR_{x_i}$. Now, let $T_1 := R(A) + L(A)$ and consider the commutator $[T_1,T_1]$. Let $x = \sum_i a_i x_i$ and $y = \sum_i y_j$, from:
	$$
	R_x L_y = \sum_{i,j} R_{x_i}L_{y_j} = \sum_{i,j} L_{L_{y_j}(x_i)}R_{x_i},
	$$
	where the second equality is obtained by means of the self-distributivity in the quandle $X$, it follows that $R(A)L(A) \subseteq L(A)R(A)$, and we therefore obtain that $T_2 = [T_1,T_1] \subseteq L(A)L(A) + L(A)R(A) + R(A)R(A)$. Inductively, assuming $T_k = \sum_{n,m=0}^{k} L^m(A) \cdot R^n(A)$, we have that $$T_{k+1} := [T_1,T_k] \subseteq \sum_{n,m=0}^{k} [L(A), L^m(A) \cdot R^n(A)] + \sum_{n,m=0}^{k} [R(A),L^m(A) \cdot R^n(A)].$$
	But since $R(A)\cdot L(A) \subseteq L(A)\cdot R(A)$, it follows that $[L(A), L^m(A) \cdot R^n(A)] \subseteq L^{m+1}\cdot R^n(A)$ and $[R(A),L^m(A) \cdot R^n(A)]\subseteq L^m(A) \cdot R^{n+1}(A)$. This concludes the proof.
\end{proof}
\begin{remark}
    {\rm 
    It is clear that the sum in Proposition~\ref{lem:Lietransf} is finite, since the space of linear maps is finite dimensional when $X$ is a finite quandle. The proof is indeed valid also for infinite quandles, but it is not clear at this point whether the Lie transformation algebra is finite dimensional.
    }
\end{remark}
\begin{remark}
	{\rm 
	In fact, an argument similar to that given in the proof of 
	Proposition~\ref{lem:Lietransf}, shows that $\mathcal {L R}(A)$ is a Lie algebra. This fact provides an alternative proof to the inclusion given above, since the Lie transformation algebra of $A$ is by definition the smallest Lie algebra containing the right and left multiplication maps. 
}
	\end{remark}

We now consider the case of Alexander quandle algebras, where $X$ is a $\Z[T,T^{-1}]$-module and the quandle structure is determined by the operation $x\rt y = Tx + (1-T)y$. For simplicity of notation, we set $T = \alpha$ and $1-T = \beta$. 

\begin{proposition}\label{lem:alex}
	Let $X$ be an Alexander quandle and $A:= \mathbbm k[X]$ be its quandle algebra. Then any element of the Lie transformation algebra $\mathcal T(A)$ can be written as a sum of type
	$$
	S = L_{f_{00}} + L_{f_{10}}L_0 + L_{f_{01}}R_0 + \ldots + L_{f_{nm}}L_0^nR_0^m + R_{g_{00}} + R_{g_{10}}L_0 + R_{g_{01}}R_0 + \ldots + R_{g_{nm}}L_0^nR_0^m,
	$$ 
	where $f_{ij}$ and $g_{ij}$ are basis elements of $A$.
\end{proposition}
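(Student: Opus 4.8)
The starting point is Proposition~\ref{lem:Lietransf}, which gives $\mathcal T(A)\subseteq \mathcal{LR}(A)$; hence every element of $\mathcal T(A)$ is a $\mathbbm k$-linear combination of monomials $L_{e_{a_1}}\cdots L_{e_{a_m}}R_{e_{b_1}}\cdots R_{e_{b_n}}$ in which all left multiplications precede all right multiplications, and where the indices $a_i,b_j\in X$ may be taken to be basis elements. By linearity it is enough to rewrite one such monomial into the asserted shape. The whole argument rests on three rewriting rules obtained by evaluating operators on a basis vector $e_y$ and using $L_{e_a}(e_y)=e_{\alpha a+\beta y}$ and $R_{e_a}(e_y)=e_{\beta a+\alpha y}$, where $\alpha=T$ and $\beta=1-T$.

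First I would establish the two \emph{merge} rules
$$
L_{e_a}L_{e_c}=L_{e_{a+\beta c}}L_0,\qquad R_{e_a}R_{e_c}=R_{e_{a+\alpha c}}R_0,
$$
each checked by a one-line computation on $e_y$. Iterating the first rule collapses the whole left block to a single head times a power of $L_0$, namely $L_{e_{a_1}}\cdots L_{e_{a_m}}=L_{e_f}L_0^{\,m-1}$ with $f=\sum_{i=1}^m\beta^{\,i-1}a_i\in X$; iterating the second collapses the right block to $R_{e_{b_1}}\cdots R_{e_{b_n}}=R_{e_g}R_0^{\,n-1}$ with $g=\sum_{i=1}^n\alpha^{\,i-1}b_i\in X$. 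Thus the monomial becomes $L_{e_f}L_0^{\,m-1}R_{e_g}R_0^{\,n-1}$, which is already in normal form except for the single stray factor $R_{e_g}$ sitting between the $L_0$'s and the $R_0$'s.

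To remove it I would prove the \emph{conversion} rule
$$
L_{e_a}R_{e_g}=L_{e_{a+\alpha^{-1}\beta^2 g}}R_0,
$$
again by evaluating on $e_y$; this is the one place where invertibility of $T=\alpha$ in $\Z[T,T^{-1}]$ is used, guaranteeing that $\alpha^{-1}\beta^2 g\in X$ and hence that $e_{\alpha^{-1}\beta^2 g}$ is again a basis element. Applying this rule once to the adjacent pair (the last $L_0$, or $L_{e_f}$ itself when $m=1$, together with $R_{e_g}$) turns $R_{e_g}$ into $R_0$ at the cost of inserting one extra left multiplication into the left block; re-collapsing that block by the first merge rule yields $L_{e_{f'}}L_0^{\,m-1}R_0^{\,n}$ with a basis index $f'\in X$. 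This is a term of the first listed type. The monomials with no left factors ($m=0$) are already of the form $R_{e_g}R_0^{\,n-1}=R_{e_g}L_0^{0}R_0^{\,n-1}$, a term of the second listed type, so no conversion is needed there.

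Summing these reduced monomials over the original linear combination then expresses every element of $\mathcal T(A)$ as a $\mathbbm k$-linear combination of terms $L_{e_{f}}L_0^{\,i}R_0^{\,j}$ and $R_{e_{g}}L_0^{\,i}R_0^{\,j}$ with $f,g$ basis elements, which is exactly the displayed form for $S$. The only genuine obstacle is discovering and verifying the conversion rule and tracking the index bookkeeping through the iterated merges; the edge cases $m=0$, $n=0$ and $m=1$ are routine once the three rules are in place.
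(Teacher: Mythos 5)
Your argument is correct, but it reaches the normal form by a genuinely different route than the paper. The paper never passes through Proposition~\ref{lem:Lietransf} as a black box: it works directly with the commutator filtration $T_1, T_2,\ldots$ defining $\mathcal T(A)$, first computing $[L_x,L_y]$, $[L_x,R_y]$, $[R_x,R_y]$ explicitly (e.g.\ $[L_x,R_y](e_z) = L_{x+\frac{\beta^2}{\alpha}y}R_0 e_z - L_{\alpha x+\frac{\beta}{\alpha}y}R_0 e_z$) and then inducting on commutator depth, showing that brackets such as $[L_xL_0^nR_0^m,L_y]$ are again differences of terms $L_zL_0^pR_0^q$ with one exponent raised by one. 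You instead quote the containment $\mathcal T(A)\subseteq \mathcal{LR}(A)$ and normalize an \emph{arbitrary} monomial $L_{e_{a_1}}\cdots L_{e_{a_m}}R_{e_{b_1}}\cdots R_{e_{b_n}}$ via the merge rules $L_{e_a}L_{e_c}=L_{e_{a+\beta c}}L_0$, $R_{e_a}R_{e_c}=R_{e_{a+\alpha c}}R_0$ and the conversion rule $L_{e_a}R_{e_g}=L_{e_{a+\alpha^{-1}\beta^2 g}}R_0$ (all three check out on basis vectors, and, as in the paper, invertibility of $T$ is what makes the conversion index land in $X$). Your rewriting in fact proves the stronger statement that every element of $\mathcal{LR}(A)$ --- not just of $\mathcal T(A)$ --- has the asserted form, and it cleanly isolates the associative identities from the Lie-theoretic generation process; the paper's induction, on the other hand, is self-contained (it does not need Proposition~\ref{lem:Lietransf}) and records slightly finer information, namely how each iterated commutator decomposes as a difference of two normal-form operators. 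The edge cases you mention ($m=0$, $n=0$, $m=1$) are handled correctly, and the index bookkeeping $f=\sum_i\beta^{i-1}a_i$, $g=\sum_i\alpha^{i-1}b_i$ is right, so no gap remains.
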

\begin{proof}
	First we consider the first commutator $[\mathcal L(X) +\mathcal R(X), \mathcal L(X) +\mathcal R(X)]$ defining the Lie transformation algebra of $A$. For $e_x$, $e_y$ and $e_z$ basis elements of $A$, we have that 
	\begin{eqnarray*}
	[L_x,L_y] (e_z) &=& e_{\alpha x+\alpha\beta y+\beta^2 z} - e_{\alpha y+\alpha\beta x+\beta^2 z}\\ 
	&=& L_{x+\beta y}e_{\beta z} - L_{y+\beta x} e_{\beta z}\\ &=& L_{x+\beta y}L_0 e_z - L_{y+\beta x}L_0e_z. 
	\end{eqnarray*}
Similarly, we obtain 
$$
[L_x,R_y](e_z) = L_{x+\frac{\beta^2}{\alpha}y}R_0 e_z - L_{\alpha x+\frac{\beta}{\alpha}y} R_0 e_z
$$
and 
$$
[R_x,R_y] (e_z) = R_{\alpha y + x}R_0 e_z - R_{\alpha x+ y}R_0 e_z. 
$$
We now proceed by induction on the commutators $[\mathcal L(X) +\mathcal R(X), \mathcal L(X) +\mathcal R(X)]^n := [\mathcal L(X) +\mathcal R(X), [\mathcal L(X) +\mathcal R(X), \mathcal L(X) +\mathcal R(X)]^{n-1}]$. By direct computation, we have
$$
	[L_xL_0^nR_0^m, L_y] = L_{x+\alpha^{m+1}\beta^{n+1}y} L_0^{n+1}R_0^m(e_z) - L_{y+\beta x} L_0^{n+1}R_0^m(e_z),
	$$
	and also $[L_xL_0^nR_0^m, R_y]$ and $[R_xL_0^nR_0^m, L_y]$ can be written as a difference of elements of type $L_zL_0^pR_0^q$, where either $p = n+1$ and $q = m$ or $p = n$ and $q = m+1$. Similarly, $[R_xL_0^nR_0^m, R_y]$ is a difference of maps $R_zL_0^nR_0^{m+1}$ for some $z$. From the inductive hypothesis it follows that 
	\begin{eqnarray*}
	\lefteqn{ [\mathcal L(X) +\mathcal R(X),  [\mathcal L(X) +\mathcal R(X), \mathcal L(X) +\mathcal R(X)]^{n-1}]}\\ & \subset& \sum_{i = 0}^{n+1} [\mathcal L(X)\cdot L_0^iR_0^{n+1-i} + \mathcal R(X)\cdot L_0^iR_0^{n+1-i} ].
\end{eqnarray*}
	Therefore an element of the Lie transformation algebra of $A$ is as in the statement. 
\end{proof}

Observe that when $X$ is finite, then $\alpha$ and $\beta$ have finite order, denoted ${\rm ord}(\alpha)$ and ${\rm ord}(\beta)$, respectively. Therefore the sum in Proposition~\ref{lem:alex} can be taken with $n\leq {\rm ord}(\alpha)$ and $m\leq {\rm ord}(\beta)$. Moreover, from the proof it is also clear that $[L_x,R_x] = 0$, which is not an obvious fact in general.

\begin{example}
{\rm 
In this example we give explicit computations of the Lie transformation algebra $\mathcal T(A)$, where $A:= \mathbbm k[X]$ for the three quandles of order $3$.

\begin{enumerate}[label=\rm \arabic*)]

\item
For the trivial quandle of order $3$, the Lie transformation algebra $\mathcal T(A)$ is $3$-dimensional with a basis $(L_1, L_2, L_3)$ and brackets defined as 
$[L_i,L_j]=L_{i}-L_{j} 
$, with $i,j\in \{1,2,3\}$.  In fact for any trivial quandle of order $n$, the Lie transformation algebra is $n$-dimensional with the same structure. 
   
  \item
  Let us consider the quandle $X$ on $3$ elements $\{1,2,3\}$ determined by right multiplications $R_1 = R_2 = \mbox{id}$, and $R_3 = (1 \;2)$. We indicate by the same symbols the right multiplications on the quandle algebra $A := \mathbbm k[X]$ over a field $\mathbbm k$ of characteristic zero. The elements of $A$, as before, are written $e_k$ for $k=1,2,3$. The corresponding left multiplications are as follows, $L_i e_j = e_i$, where $i,j = 1,2$, $L_1 e_3 = e_2$ and $L_2 e_3 = e_1$, and $L_3 = P_3$, where $P_i$ is the map that projects every vector to $e_i$. By direct computation we see that $[L_1,L_2] = P_1 - P_2$, $[L_1,L_3] = P_2 - P_3$ and $[L_2,L_3] = P_1-P_3$. In particular, one has $[L_1,L_2] + [L_1,L_3] = [L_2,L_3]$. Moreover, we have $[L_1,R_3] = - [L_2,R_3] = L_1-L_2$, $[L_3,R_3] = 0$ and, of course, $[L_i,R_j] = 0$ for all $i = 1,2,3$ and $j = 1,2$. This computations determine the commutator space $[T_1,T_1]$, where $T_1$ is the linear space generated by right and left multiplications. The higher commutators $[T_i,T_1]$ do not generated new elements in the Lie transformation algebra of $A$, since the commutators of left and right multiplications with the projectors $P_i$ can be written as combinations of the projectors $P_i$. We conclude that $\mathcal T(A) = \langle \mbox{id}, L_1, L_2, R_3, P_1, P_2, P_3\rangle$. 
  
  \item
  For the dihedral quandle on 3 elements $\{1,2,3\}$ given by $R_1=L_1=( 2\;3), R_2=L_2=(1\;3)$ and $R_3=L_3=(1\;2)$.  The algebra $A := \mathbbm k[X]$ has basis $e_i$ for $i=1,2,3$.  In particular, the Lie transformation algebra is generated by left multiplications. To simplify the computation of commutators, we will change the basis $(e_1,e_2,e_3)$ to $(u,v,w)$, where $u=e_1+e_2+e_3$, $v=e_2-e_1$ and $w=e_3-e_1$. Since $u$ is fixed by all $L_k$, we thus have $[L_i,L_j](u)= 0$.  Now the actions of $L_1, L_2$ and $L_3$ on $(v,w)$ are given respectively by the matrices $\begin{pmatrix} 0 &1\\1&0 \end{pmatrix}$, $\begin{pmatrix} 1 &0\\-1&-1 \end{pmatrix}$ and $\begin{pmatrix} -1 &-1\\0&1 \end{pmatrix}$.  Now direct computations give that $[L_1,L_2]$ acts on $(v,w)$ as the matrix $\begin{pmatrix} -1 &-2\\2&1 \end{pmatrix}$ and thus $[L_1,L_2]$ is independent of $L_1, L_2, L_3$.  The other commutators are given by $[L_1,L_3]=-[L_1,L_2]$, $[L_2,L_3]=[L_1,L_2]$.  This computation gives that the commutator space $T_2=[T_1,T_1]$.  Now we have $[L_1,[L_1,L_2]]=2L_1+4L_2$, $[L_2,[L_1,L_2]]=2L_3-2L_1$, and $[L_3,[L_1,L_2]]=2L_1-2L_2$.  Thus the higher commutators $[T_i,T_1]$ do not generate any new elements in the Lie transformation algebra of $A$.  We then conclude that $\mathcal T(A) = \langle \mbox{id}, L_1, L_2, L_3, [L_1,L_2]\rangle$.    
\end{enumerate}
}   
\end{example}


\section*{Acknowledgement} The authors would like to thank Professor Ivan Shestakov for fruitful discussions, and Mahender Singh for useful suggestions. 
Mohamed Elhamdadi was partially supported by Simons Foundation collaboration grant 712462. Emanuele Zappala was supported by the Estonian Research Council through the Mobilitas Pluss scheme, grant MOBJD679. Sergei Silvestrov is grateful for support from The Royal Swedish Academy of Sciences Foundations.

\end{document}